\newtheorem{theorem}{Theorem}[section] 
\newtheorem{lemma}[theorem]{Lemma}
\newtheorem{proposition}[theorem]{Proposition}
\newtheorem{corollary}[theorem]{Corollary}
\theoremstyle{definition}
\newtheorem{definition}[theorem]{Definition}
\newtheorem{remark}[theorem]{Remark}
\newcommand{\RR}{\mathbb{R}}
\newcommand{\QQ}{\mathbb{Q}}
\newcommand{\NN}{\mathbb{N}}
\newcommand{\PP}{\mathbb{P}}
\DeclareMathOperator{\vol}{vol}
\newcommand{\I}{\mathcal{I}}
\newcommand{\B}{\mathcal{B}}
\DeclareMathOperator{\Sym}{sym}
\DeclareMathOperator{\id}{id}
\DeclareMathOperator{\rk}{rk}
\title{Linear operators preserving volume polynomials}
\author{Lukas Grund}
\author{Hendrik S\"uss}
\address{Friedrich-Schiller-Universit\"at Jena, Fakult\"at f\"ur Mathematik und Informatik, Institut für Mathematik, Ernst-Abbe-Platz 2, 07743 Jena, Germany}
\email{lukas.grund@uni-jena.de}
\address{Friedrich-Schiller-Universit\"at Jena, Fakult\"at f\"ur Mathematik und Informatik, Institut für Mathematik, Ernst-Abbe-Platz 2, 07743 Jena, Germany}
\email{hendrik.suess@uni-jena.de}
\subjclass[2020]{14C17 (Primary) 52C35, 05B35 (Secondary)}
\keywords{volume polynomials, co-volume polynomials, preservers, Lorentzian polynomials, matroids}
\begin{document}

\begin{abstract}
  We study linear operators preserving the property of being a volume polynomial. More, precisely we show that a linear operator preserves this property if the associated \emph{symbol} is itself a volume polynomial. This can be seen as an analogue to theorems by Borcea-Br\"and\'en and Br\"and\'en-Huh for stable polynomials and Lorentzian polynomials, respectively.
\end{abstract}
\maketitle

\section{Introduction}\label{sec:introduction}
Given a  projective variety $X$ of dimension $d$ and nef  divisor classes $D_1,\ldots,D_n$ we call
  \[\vol_{X,D_1,\ldots,D_n}=\left(\sum x_i D_i \right)^d=\sum_{|\alpha|=d} \frac{d!}{\alpha_1! \cdots \alpha_n!}\left(D_1^{\alpha_1}\cdots D_n^{\alpha_n}\right)x_1^{\alpha_1}\cdots x_n^{\alpha_n}\]
  the \emph{associated volume polynomial}.  Here, $\left(D_1^{\alpha_1} \cdots D_n^{\alpha_n}\right)$ denotes the intersection product of the corresponding divisor classes. More generally, we also call  limits of positive multiples of such polynomials \emph{volume polynomials}. This definition also covers volume polynomials arising from convex bodies. This follows from the theory of toric varieties, see \cite[Section~5.4]{zbMATH00447302}. Volume polynomials come with strong log concavity properties, known as Khovanskii-Teissier or Alexandrov-Fenchel inequalities, which historically have been used to prove log concavity for sequences arising in combinatorics, see e.g. \cite{zbMATH03760173,zbMATH03893257}. There are, however, limitations to this approach. For example in matroid theory volume polynomials are rather tied to the representable case. Moreover, due to the geometric nature of the definition, proving that a polynomial is a volume polynomials usually involves some ad-hoc geometric construction. To overcome these limitations \emph{Lorentzian polynomials} were introduced by Br\"and\'en and Huh in \cite{BrandenHuh} and independently by Anari, Oveis Gharan and Vinzant in \cite{zbMATH07442560}. The definition of this class of polynomials is purely algebraic and in comes with a powerful theory of operators preserving this property, which often allows to show the Lorentzian property for a given polynomial by deriving it from a known  Lorentzian polynomial via these preserving operators in a more or less mechanical way.

  Although Lorentzian polynomials are a powerful tool, the approach breaks down, when one actually wants to show that a polynomial fulfils the strictly stronger property of being a volume polynomial. There are several situations where such a result would in fact be desired. For example it has been conjectured in \cite{Eur} that the independent set generating polynomial of a realisable matroid is a volume polynomial. 

  Our aim here is to provide a theory of preservers for volume polynomials and a toolbox of operators which can be used to derive new volume polynomials from known ones without having to care about geometric constructions.  In \cite{zbMATH05598918} the authors associated to an operator a polynomial, called the \emph{symbol} of the operator.  In \cite{BrandenHuh} it was shown that an operator with Lorentzian symbol preserves the Lorentzian property.
  Our main result replicates this theorem for volume polynomials.

  In Section~\ref{sec:preliminaries} we fix our notation and recall some  operations, which are well known to preserve the property of being a volume polynomial.

  In Section~\ref{sec:preservers} we prove our symbol criterion. As a proof of concept we reproduce and extend the results of \cite{ross2025diagonalizations} by using symbols instead of the ad-hoc arguments from \cite{ross2025diagonalizations}.  As a result, for many known preservers of Lorentzian polynomials we show that they also preserve volume polynomials.  

  Following \cite{RSW23} we introduce \emph{co-volume polynomials} as differential operators with constant coefficients preserving the property of being a volume polynomial.  In Section~\ref{sec:co-volume} we show that this notion coincides with the one introduced by Aluffi in \cite{Aluffi}.

  In Section~\ref{sec:matroids} we discuss the appearance of (conjectured) volume polynomials in matroid theory.
  \subsection*{Acknowledgements} We are grateful to Matt Larson for pointing out to us the applications to matroids, for further helpful comments on the manuscript and for patiently answering all our questions on matroid theory. We also would like thank Paolo Aluffi for helpful discussions on co-volume polynomials.   This material is based upon work supported by DFG grant 539864509.
  \section{Preliminaries}
  \label{sec:preliminaries}

  In this paper we denote the non-negative integers by $\NN$.  For an elements $\alpha = (\alpha_1,\ldots,\alpha_n) \in  \NN^n$ and $\beta = (\beta_1,\ldots,\beta_n) \in  \NN^n$ set $|\alpha|=\sum_i \alpha_i$, $\alpha!=\prod_i \alpha_i!$, $\binom{\alpha}{\beta}=\prod_i \binom{\alpha_i}{\beta_i}$ and $\binom{|\alpha|}{\alpha}=\frac{|\alpha|!}{\alpha!}$. 
We denote the $i$-th canonical basis vectors in $\NN^^n$ by $e_i$ and the element $(d,\ldots,d) \in \NN^n$ by $\underline{\mathbf{d}}$.
We write $x^\alpha$ for the monomial in $\RR[x_1,\ldots,x_n]$ with exponent vector $\alpha$, i.e. $x^\alpha = \prod_{i=1}^n x_i^{\alpha_i}$. Let $s \in \RR[x_1, \ldots x_n]$ be a homogeneous polynomial. Then we also consider the corresponding differential operator $\partial_s:=s(\partial_{x_1},\ldots,\partial_{x_n}) \in \RR[\partial_{x_1},\ldots,\partial_{x_n}]$. Moreover, for $\kappa \in \mathbb{N}^n$, we write $\RR[\underline x]$ as short-hand for $\RR[x_1\ldots,x_n]$ and  set \[\mathbb{R}_\kappa[\underline x] = \{ \text{polynomials in } \mathbb{R}[x_1,\dots,x_n] \text{ of degree at most } \kappa_i \text{ in } x_i \text{ for every }i \}\]

In the following we will denote the set of all volume polynomials of degree $d$ in $n$ variables by $V^d_n$.

The following facts about volume polynomials are well known. For proofs we refer e.g. to \cite[Section 2.2]{ross2025diagonalizations}.
\begin{lemma}
  \label{lem:linmap}
  Assume that $f(x_1\ldots,x_n)$ is a volume polynomial and $A$ is a $(n\times m)$-matrix with non-negative entries. 
  Then $f(Ax')$ is again a volume polynomial in variables $x_1',\ldots,x_m'$.
\end{lemma}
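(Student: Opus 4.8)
The plan is to reduce to the case where $f$ is an \emph{exact} volume polynomial, i.e.\ $f = \bigl(\sum_{i=1}^n x_i D_i\bigr)^d$ for nef divisor classes $D_1,\ldots,D_n$ on a projective variety $X$ of dimension $d$. The key point is that precomposition with $A$ leaves $X$ unchanged while merely replacing the $D_i$ by non-negative linear combinations of them, which remain nef because the nef cone is a convex cone. Writing $A = (a_{ij})$, so that $(Ax')_i = \sum_{j=1}^m a_{ij} x_j'$, one computes
\[
  f(Ax') \;=\; \Bigl(\sum_{i=1}^n \bigl(\sum_{j=1}^m a_{ij} x_j'\bigr) D_i\Bigr)^{d}
  \;=\; \Bigl(\sum_{j=1}^m x_j' D_j'\Bigr)^{d}, \qquad D_j' := \sum_{i=1}^n a_{ij} D_i ,
\]
and since the $a_{ij}$ are non-negative, each $D_j'$ is nef, so $f(Ax')$ is the volume polynomial of $X$ with respect to $D_1',\ldots,D_m'$. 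This settles the exact case, and also explains why the hypothesis is essential: for a matrix with a negative entry the $D_j'$ need not be nef any more.

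For a general $f \in V_n^d$ there are, by definition, exact volume polynomials $g_k$ in $n$ variables and positive reals $c_k$ with $f = \lim_k c_k g_k$, the limit being taken in the finite-dimensional space of homogeneous polynomials of degree $d$ in $x_1,\ldots,x_n$. Precomposition with $A$ is a linear, hence continuous, map from that space into the space of homogeneous degree-$d$ polynomials in $x_1',\ldots,x_m'$, so $f(Ax') = \lim_k c_k\, g_k(Ax')$. Each $g_k(Ax')$ lies in $V_m^d$ by the exact case, hence so does $c_k\, g_k(Ax')$, and as $V_m^d$ is closed under limits of such sequences we obtain $f(Ax') \in V_m^d$.

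The argument is essentially bookkeeping, so I do not expect a serious obstacle; the most delicate point is handling the limit clause of the definition correctly, together with the harmless degenerate case in which $f(Ax')$ vanishes identically — this is fine since the zero polynomial is a volume polynomial, for instance as the limit $\tfrac{1}{k} g_k \to 0$.
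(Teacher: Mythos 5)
Your proof is correct and is the standard argument: handle the exact case by absorbing the non-negative matrix into new nef classes $D_j' = \sum_i a_{ij}D_i$ (using convexity of the nef cone), then pass to the general case by continuity of precomposition with $A$ on the finite-dimensional space of degree-$d$ forms. The paper itself gives no proof and simply cites \cite[Section 2.2]{ross2025diagonalizations}, where essentially this same argument appears, so there is nothing to compare beyond noting that your write-up fills in the limit bookkeeping carefully and correctly.
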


\begin{lemma}
  \label{lem:prodofvol}
  Products of volume polynomials are again volume polynomials.
\end{lemma}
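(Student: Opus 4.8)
The plan is to realise the product of two volume polynomials, up to a positive scalar, as the volume polynomial of a product variety. Since each $V_n^{\delta}$ is closed by its very definition and polynomial multiplication is a continuous (bilinear) operation, it suffices to treat the case of two \emph{honest} volume polynomials $f = \vol_{X,D_1,\dots,D_n}$ and $g = \vol_{Y,E_1,\dots,E_n}$, with $X$ and $Y$ projective varieties of dimensions $d$ and $e$ and all $D_i$ and $E_i$ nef: once we know that $fg$ is then a positive multiple of a volume polynomial, the general statement follows by approximating arbitrary $f \in V_n^{d}$, $g \in V_n^{e}$ by positive multiples of honest volume polynomials and passing to the limit. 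After adjoining zero divisor classes (which are nef; cf.\ also Lemma~\ref{lem:linmap}) we may moreover assume both polynomials are written in the same variables $x_1,\dots,x_n$.

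For the honest case I would pass to $Z = X \times Y$, a projective variety of dimension $d+e$, with projections $p_1\colon Z \to X$ and $p_2\colon Z \to Y$, and put $F_i = p_1^{*}D_i + p_2^{*}E_i$. Pullbacks of nef classes along morphisms are nef and sums of nef classes are nef, so each $F_i$ is nef and $\vol_{Z,F_1,\dots,F_n}$ is a volume polynomial. The key computation is to expand $\vol_{Z,F_1,\dots,F_n} = \left(p_1^{*}(\sum_i x_i D_i) + p_2^{*}(\sum_i x_i E_i)\right)^{d+e}$ by the binomial theorem and invoke the product formula for intersection numbers on $Z$: for a codimension-$d$ class $a$ on $X$ and a codimension-$e$ class $b$ on $Y$ one has $\deg_Z(p_1^{*}a \cdot p_2^{*}b) = \deg_X(a)\,\deg_Y(b)$, while $p_1^{*}a\cdot p_2^{*}b$ has degree $0$ in every other complementary-codimension situation, since the class of a zero-cycle is its degree times the class of a point. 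Collecting terms with multinomial bookkeeping, one finds that for $|\gamma| = d+e$ the coefficient of $x^\gamma$ in $\vol_{Z,F_1,\dots,F_n}$ equals $(d+e)!\sum \tfrac{1}{\alpha!\,\beta!}\,(D_1^{\alpha_1}\cdots D_n^{\alpha_n})(E_1^{\beta_1}\cdots E_n^{\beta_n})$, the sum running over $\alpha+\beta = \gamma$ with $|\alpha| = d$ and $|\beta| = e$, whereas the coefficient of $x^\gamma$ in $fg$ equals $d!\,e!$ times the same sum. Hence $\vol_{Z,F_1,\dots,F_n} = \binom{d+e}{d}\, fg$, so $fg$ is a positive multiple of a volume polynomial and therefore a volume polynomial; together with the reduction above this proves the lemma.

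I expect the only genuinely delicate point to be the precise statement and justification of the product formula $\deg_Z(p_1^{*}a\cdot p_2^{*}b) = \deg_X(a)\,\deg_Y(b)$ — equivalently, the multiplicativity of intersection numbers across the two factors of a product variety — the remaining ingredients being the binomial theorem, the projection formula, and the elementary identity $\tfrac{(d+e)!}{\alpha!\,\beta!} = \binom{d+e}{d}\cdot\tfrac{d!\,e!}{\alpha!\,\beta!}$.
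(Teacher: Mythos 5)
Your argument is correct and is essentially the standard proof of this well-known fact: the paper itself gives no proof but defers to \cite[Section 2.2]{ross2025diagonalizations}, where the same product-variety construction $Z = X\times Y$, $F_i = p_1^*D_i + p_2^*E_i$ is used, yielding $\vol_{Z,F_1,\dots,F_n} = \binom{d+e}{d} fg$ exactly as you compute. The reduction to the honest case via limits of positive multiples and the padding by zero (nef) classes are both handled correctly.
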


\begin{lemma}
  \label{lem:monomisvol}
  Every monomial is a volume polynomial.
\end{lemma}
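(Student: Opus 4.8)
The plan is to exhibit a single projective variety whose volume polynomial is, up to a positive constant, the given monomial. Write the monomial as $x^\alpha$ with $d = |\alpha|$ and take
\[
  X = \PP^{\alpha_1} \times \cdots \times \PP^{\alpha_n},
\]
a projective variety of dimension $d$ (a factor with $\alpha_i = 0$ is just a point and contributes nothing). Let $\pi_i \colon X \to \PP^{\alpha_i}$ be the projection onto the $i$-th factor and set $D_i = \pi_i^{*}\mathcal{O}_{\PP^{\alpha_i}}(1)$; each $D_i$ is nef, being the pullback of an ample class (and simply $D_i = 0$, which is also nef, when $\alpha_i = 0$).

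Next I would compute the associated volume polynomial. Expanding,
\[
  \vol_{X,D_1,\ldots,D_n} = \Bigl( \textstyle\sum_{i} x_i D_i \Bigr)^{\!d} = \sum_{|\beta| = d} \frac{d!}{\beta!}\,\bigl(D_1^{\beta_1}\cdots D_n^{\beta_n}\bigr)\, x^\beta ,
\]
and the point is that $D_i^{\beta_i} = 0$ in $H^{*}(X)$ whenever $\beta_i > \alpha_i$, since $D_i$ is pulled back from the $\alpha_i$-dimensional factor $\PP^{\alpha_i}$. As $\sum_i \beta_i = d = \sum_i \alpha_i$, the constraints $\beta_i \le \alpha_i$ for all $i$ leave only $\beta = \alpha$, and then $D_1^{\alpha_1}\cdots D_n^{\alpha_n}$ is the class of a point, so the intersection number is $1$. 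Hence $\vol_{X,D_1,\ldots,D_n} = \tfrac{d!}{\alpha!}\, x^\alpha$.

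Since $\tfrac{d!}{\alpha!} > 0$, this exhibits $x^\alpha$ as a positive multiple of a volume polynomial, hence itself a volume polynomial, which is what we want. Alternatively, one can avoid the product variety and bootstrap: $x_i$ is the volume polynomial of $\PP^1$ with $D_i$ the hyperplane class and all other divisors zero, so $x^\alpha = \prod_i x_i^{\alpha_i}$ is a volume polynomial by Lemma~\ref{lem:prodofvol}. I do not expect a genuine obstacle here; the only point needing a line of care is the vanishing of the off-diagonal intersection numbers, together with the bookkeeping for the degenerate factors $\PP^0$.
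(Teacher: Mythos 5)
Your proof is correct, and it is essentially the standard argument: the paper itself gives no proof of this lemma, deferring to \cite[Section 2.2]{ross2025diagonalizations}, where the fact is established in exactly this way (realising $x^\alpha$ up to the positive constant $d!/\alpha!$ on a product of projective spaces, or equivalently via $\PP^1$-factors and Lemma~\ref{lem:prodofvol}). Both of your variants are sound, including the handling of the degenerate factors with $\alpha_i=0$ and the vanishing of the off-diagonal intersection numbers.
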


\begin{lemma}
  \label{lem:schubertisvol}
  Let $s \in \mathbb{R}[x_1,\dots,x_n]$ be a Schubert polynomial. The differential 
  operator $\partial_s$ preserves volume polynomials.\footnote{The argument in \cite[Section 2.2]{ross2025diagonalizations} was not sufficient. A correct proof (more generally for co-volume polynomials) is provided in \cite{grund2025linearoperatorspreservingvolume}}
\end{lemma}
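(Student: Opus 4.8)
The plan is to reduce to showing that $\partial_s\vol_{X,D_1,\dots,D_n}$ is a volume polynomial for a single projective $X$ and nef $D_1,\dots,D_n$, and to prove that by exhibiting the cohomology class $s(D_1,\dots,D_n)$ as the class of a single \emph{irreducible} subvariety of $X$. Write $s=\mathfrak S_w$ for a permutation $w$ and set $e:=\deg s=\ell(w)$; we may assume $e\le d:=\dim X$, since otherwise $\partial_s\vol_{X,D_1,\dots,D_n}=0$. As every volume polynomial is a limit of positive multiples of polynomials $\vol_{X,D_1,\dots,D_n}$ and $V^{d-e}_n$ is closed with $\partial_s$ linear and continuous, it is enough to treat these; and a routine limiting argument (using Lemma~\ref{lem:linmap} and the homogeneity of $\mathfrak S_w$, so that after clearing a common denominator a rescaling of the $D_i$ only multiplies $\partial_s$ by a positive constant, together with the fact that every nef class is a limit of ample $\QQ$-classes) further reduces to the case where the $D_i$ are very ample. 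A direct computation then yields
\[\partial_s\vol_{X,D_1,\dots,D_n}=\frac{d!}{(d-e)!}\,s(D_1,\dots,D_n)\cdot\left(\sum_i x_iD_i\right)^{\!d-e},\]
with $s(D_1,\dots,D_n)\in H^{2e}(X)$ the evaluation of $s$ at the classes $D_i$.

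The heart of the matter is to represent $s(D_1,\dots,D_n)$ by an irreducible subvariety of $X$. Since the $D_i$ are very ample, $E:=\bigoplus_{i=1}^{n}\mathcal O_X(D_i)$ is globally generated and carries the coordinate flag of subbundles, whose successive quotients have first Chern classes $D_1,\dots,D_n$. Measuring a sufficiently general morphism between $E$ and a trivial bundle $\mathcal O_X^{\oplus N}$ (with its trivial flag, all of whose Chern roots vanish) against these two flags, Fulton's theory of degeneracy loci produces a closed subscheme $\Omega_w\subseteq X$ with
\[[\Omega_w]=\mathfrak S_w(D_1,\dots,D_n;\,0,\dots,0)=\mathfrak S_w(D_1,\dots,D_n)=s(D_1,\dots,D_n),\]
which, for $N$ large enough, is irreducible of codimension $e$ — the relevant matrix Schubert variety being irreducible, and a transversality argument for the universal morphism transferring this to a generic member. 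The projection formula now gives
\[\partial_s\vol_{X,D_1,\dots,D_n}=\frac{d!}{(d-e)!}\left(\sum_i x_i\,D_i|_{\Omega_w}\right)^{\!\dim\Omega_w}=\frac{d!}{(d-e)!}\,\vol_{\Omega_w,\,D_1|_{\Omega_w},\dots,D_n|_{\Omega_w}},\]
a positive multiple of a volume polynomial, since the restrictions $D_i|_{\Omega_w}$ are again nef. This would complete the proof.

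The real difficulty — and the reason the argument in \cite{ross2025diagonalizations} was incomplete — I expect to lie precisely in this second step. One must resist expanding $\mathfrak S_w=\sum_\beta c_\beta x^\beta$ and treating $\partial_s$ monomial by monomial: each $\partial_{x^\beta}$ preserves volume polynomials (Lemmas~\ref{lem:monomisvol}, \ref{lem:prodofvol} and \ref{lem:linmap} even compute $\partial_{x^\beta}\vol_{X,D_1,\dots,D_n}$ explicitly), but a \emph{sum} of volume polynomials of equal degree need not be a volume polynomial — already $x_1^2+x_2^2$ is not — so nonnegativity of the coefficients of a Schubert polynomial buys nothing on its own. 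What is genuinely used is the sharper fact that $\mathfrak S_w$ is the class of an \emph{irreducible} degeneracy locus; the technical content is to arrange the degeneracy condition so that its class is the single-alphabet polynomial $\mathfrak S_w(D_1,\dots,D_n)$ — not a double Schubert polynomial in the $D_i$ and an auxiliary alphabet — on $X$ itself, so that the restricted divisors stay nef, and then to verify irreducibility and the expected codimension. The reductions to the very ample case, though fiddly, are routine. The statement can also be packaged via the co-volume polynomials of Section~\ref{sec:co-volume} and their identification with Aluffi's class \cite{Aluffi}, which yields it in greater generality; alternatively, once the symbol criterion of Section~\ref{sec:preservers} is available, it reduces the lemma to showing that $\partial_s(x_1^m\cdots x_n^m)$ is a volume polynomial for every $m$, which the construction above gives by taking $X=(\PP^m)^n$ and $D_i=\mathrm{pr}_i^{*}\mathcal O(1)$.
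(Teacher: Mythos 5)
Your reductions (to $f=\vol_{X,D_1,\dots,D_n}$ with the $D_i$ very ample, using closedness of $V_n^{d-e}$ and a common rescaling), the identity $\partial_s\vol_{X,D_\bullet}=\tfrac{d!}{(d-e)!}\,s(D_1,\dots,D_n)\cdot\bigl(\sum_i x_iD_i\bigr)^{d-e}$, and your diagnosis of the difficulty (a sum of volume polynomials need not be one, so $s(D_1,\dots,D_n)$ must be represented by a \emph{single irreducible} subvariety) are all correct. The gap sits exactly at the step you yourself call the heart of the matter: irreducibility of the general degeneracy locus does \emph{not} follow from irreducibility of the matrix Schubert variety plus ``a transversality argument for the universal morphism''. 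That argument yields irreducibility of the total space $\{(x,\varphi):\varphi_x\in\Sigma_w\}$ and pure expected dimension of a general fibre, but irreducibility never descends from the total space of a family to its general member (a connected double cover is the standard counterexample); transversality controls local structure and dimension, not connectedness. Repairing this requires a Fulton--Lazarsfeld--type connectedness theorem together with normality of matrix Schubert varieties (normal and connected implies irreducible), and the connectedness input needs ampleness of the relevant $\mathcal{H}om$ bundle. This also exposes a concrete error in your setup: with $E=\bigoplus_i\mathcal O_X(D_i)$ and the $D_i$ ample, $\mathcal{H}om(E,\mathcal O_X^{\oplus N})=(E^\vee)^{\oplus N}$ has no nonzero global sections at all, so there is no ``sufficiently general morphism'' $E\to\mathcal O_X^{\oplus N}$; the map must go the other way, and one must then recheck which alphabet of the double Schubert polynomial carries the $D_i$. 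None of this is supplied, and it is the substantial part of the proof.

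The paper takes a different route that sidesteps the problem. Via Theorem~\ref{thm:volsym}/Theorem~\ref{thm:dualvol} and the identification in Section~\ref{sec:co-volume} with Aluffi's classes, the lemma reduces to showing that $\mathfrak S_w=P_{[W]}$ for a single irreducible subvariety $W$ of a product of projective spaces --- a projectivised matrix Schubert variety, whose irreducibility is explicit (an orbit closure) and needs no genericity argument; the resulting volume polynomial is $\vol_{W,H_1|_W,\dots,H_n|_W}$ itself, so no intersection with an auxiliary $X$ ever has to be proved irreducible. Be aware, finally, that your closing suggestion to derive the lemma from the symbol criterion is circular inside this paper: the proof of Theorem~\ref{thm:volsym} already invokes Lemma~\ref{lem:schubertisvol} for the complete homogeneous symmetric polynomials $s_{\kappa_i}$, so at least that special case must be handled independently first.
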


\section{Operators preserving volume polynomials}
\label{sec:preservers}

Fix some $\kappa \in \mathbb{N}^n$ and $\gamma \in \mathbb{N}^m$. We fix a linear operator
\[T:\mathbb{R}_\kappa[\underline w] \to \mathbb{R}_\gamma[\underline w],\]
such that $T$ is homogeous of degree $l$ for some $l \in \mathbb{Z}$:
\[(0 \leq \alpha \leq \kappa \text{ and } T(w^\alpha) \neq 0) \implies \deg \ T(w^\alpha) = \deg \ w^\alpha + l.\]
We set $|\kappa| = k$. The symbol of $T$ is a homogeous polynomial of degree $k + l$ in $m+n$ variables definied by
\[\Sym_T(w,u) = \sum_{0 \leq \alpha \leq \kappa} \binom{\kappa}{\alpha} T(w^\alpha) u^{\kappa - \alpha}. \]
 
\begin{theorem}
\label{thm:volsym}
    The homogenous operator $T$ preserves volume polynomials if its symbol $\Sym_T$ is a volume polynomial, i.e. if $\Sym_T \in V_{m +n}^{k+l}$ and $f \in {V}_n^d \cap \mathbb{R}_\kappa[\underline w]$, then $T(f) \in V_m^{d+l}$.    
\end{theorem}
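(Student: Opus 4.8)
The plan is to transcribe the Br\"and\'en--Huh argument for Lorentzian polynomials into the setting of volume polynomials, replacing each elementary move by one of the closure properties recorded in Section~\ref{sec:preliminaries}. The starting point is the \emph{recovery identity}. Reading off the coefficient of $u^{\kappa-\alpha}$ in $\Sym_T$ gives $[u^{\kappa-\alpha}]\Sym_T=\binom{\kappa}{\alpha}T(w^\alpha)$, whence
\[
 T(w^\alpha)=\tfrac{\alpha!}{\kappa!}\bigl(\partial_u^{\kappa-\alpha}\Sym_T\bigr)\big|_{u=0}
 \qquad\text{and}\qquad
 T(f)=\tfrac{1}{\kappa!}\sum_{\alpha\le\kappa}c_\alpha\,\alpha!\,\bigl(\partial_u^{\kappa-\alpha}\Sym_T\bigr)\big|_{u=0}
\]
for $f=\sum_\alpha c_\alpha w^\alpha$. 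Thus $T$ is determined by $\Sym_T$, and $T(f)$ is assembled from $\Sym_T$ out of three kinds of moves: differentiating in the $u$-variables (each $\partial_{u_i}$ preserves volume polynomials by Lemma~\ref{lem:schubertisvol}, together with Lemma~\ref{lem:linmap} to permute variables), multiplying by a polynomial built from $f$ (Lemma~\ref{lem:prodofvol}), and specializing $u=0$ (a non-negative linear substitution, Lemma~\ref{lem:linmap}). Only the middle move is delicate: the obvious choice $T(f)=\mathrm{const}\cdot[u^\kappa]\bigl(\Sym_T(w,u)\,f(u)\bigr)$ in fact computes $T$ applied to the binomially reweighted polynomial $\sum_\alpha\binom{\kappa}{\alpha}c_\alpha w^\alpha$, and removing those binomials is what forces the polarization step.

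So, secondly, I would polarize: split each variable $w_i$ into $\kappa_i$ fresh ones, pass to the multilinear polarizations $\wt f$ and $\wt{\Sym}_T$ (in the $u$-block), and check that the multilinear contraction of $\wt{\Sym}_T$ against the polarization of the $\kappa$-reversal $f^{\mathrm{rev}}=\sum_\alpha c_\alpha w^{\kappa-\alpha}$ equals $T(f)$; that contraction is again a product followed by a string of $\partial$'s and a specialization, and one diagonalizes the output back at the end. For this to close the argument one needs: (a) polarization sends volume polynomials to volume polynomials; (b) $f^{\mathrm{rev}}$ is again a volume polynomial; (c) diagonalization sends multilinear volume polynomials to volume polynomials. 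Granting~(a), both (b) and (c) reduce to the multilinear level, where (b) is essentially matroid (Poincar\'e) duality --- which preserves realizability --- and (c) is a direct check.

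It clarifies both the mechanism and the difficulty to record the intersection-theoretic shadow of this. Since $(f,\Sym_T)\mapsto T(f)$ is bilinear in the coefficients and the cone $V^{d+l}_m$ is closed, it suffices to treat $\Sym_T=\bigl(\sum_i v_iA_i+\sum_j u_jB_j\bigr)^{k+l}$ on a smooth projective variety $Z$ of dimension $k+l$ with $A_i,B_j$ ample, and $f=\bigl(\sum_j w_jC_j\bigr)^d$ on a smooth projective $W$ of dimension $d$ with $C_j$ ample --- one should only take a little care that the bounds $\deg_{w_i}f\le\kappa_i$ are not destroyed by the approximation, e.g. by enlarging $\kappa$ first or by noting that the cone of volume polynomials with prescribed degree bounds is still closed. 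In this situation the recovery identity becomes
\[
 T(f)(v)=\mathrm{const}\cdot\!\!\!\sum_{|\alpha|=d+l}\frac{v^\alpha}{\alpha!}\,\bigl(A^\alpha\cdot\eta\bigr)_{Z\times W},
 \qquad
 \eta=\sum_{|\beta|=d}B^{\kappa-\beta}C^\beta ,
\]
with $\eta$ a codimension-$k$ class on $Z\times W$, and the theorem becomes the assertion that $\eta$ is a \emph{co-volume class} in the sense of Section~\ref{sec:co-volume}: contracting powers of nef divisors against $\eta$ yields a volume polynomial.

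The main obstacle, in any of these guises, is the same. The class $\eta$ is a \emph{sum} of products of nef classes with unit coefficients; restoring the ``missing'' binomial coefficients would turn it into a graded piece of $\prod_j(B_j+tC_j)^{\kappa_j}$, a class built transparently from nef classes, and the difference is exactly the content of claim~(a), that polarization preserves volume polynomials. For the Lorentzian class this statement is available, and is precisely what makes the Br\"and\'en--Huh proof run; but for volume polynomials it is substantially harder, because --- in contrast to Lorentzian polynomials --- volume polynomials are \emph{not} closed under addition, so the symmetrization hidden in a polarization cannot be analyzed term by term. I expect this step to require a genuine geometric input (a product or blow-up model that realizes the polarization, or the description of co-volume classes from Section~\ref{sec:co-volume}); once it is in hand, $\eta$ collapses to a single product of nef classes and the conclusion is immediate from Lemmas~\ref{lem:prodofvol} and~\ref{lem:schubertisvol}.
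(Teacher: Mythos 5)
Your proposal correctly identifies the shape of the argument --- recover $T(f)$ by contracting the product of $\Sym_T$ with (a polynomial built from) $f$ --- but it stops exactly where the work is: you leave step (a), that polarization preserves volume polynomials, as something you ``expect to require a genuine geometric input'', and without it the argument does not close. Worse, in this paper polarization preservation (Corollary~\ref{cor:polarisation}) is itself \emph{deduced} from Theorem~\ref{thm:volsym}, so the Br\"and\'en--Huh route you sketch would be circular here. Your diagnosis of why the Lorentzian proof does not transfer (volume polynomials are not closed under addition, so the symmetrization inside a polarization cannot be analyzed termwise) is accurate --- which is precisely why the paper does not polarize at all.

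The paper's proof sidesteps the issue with one observation you are missing: keep $f$ in a \emph{disjoint} block of variables $v$, form the product $\Sym_T(w,u)\,f(v)$ (a volume polynomial by Lemma~\ref{lem:prodofvol}), and contract $u$ against $v$ with the differential operator
\[
D=\prod_{i}\;\sum_{0\le\gamma_i\le\kappa_i}\partial_{u_i}^{\gamma_i}\partial_{v_i}^{\kappa_i-\gamma_i},
\]
a product of complete homogeneous symmetric polynomials in the pairs $(\partial_{u_i},\partial_{v_i})$. These are Schubert polynomials, so $D$ preserves volume polynomials by Lemma~\ref{lem:schubertisvol}; a direct coefficient computation gives $D\bigl(\Sym_T(w,u)f(v)\bigr)\big|_{u=v=0}=\kappa!\,T(f)$, and setting $u=v=0$ is covered by Lemma~\ref{lem:linmap}. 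In your intersection-theoretic reformulation, the class $\eta=\sum_{|\beta|=d}B^{\kappa-\beta}C^{\beta}$ whose co-volume property you could not establish is exactly the symbol of $D$, namely $\prod_i\sum_{\gamma_i}u_i^{\gamma_i}v_i^{\kappa_i-\gamma_i}$, and its co-volume property is supplied by Lemma~\ref{lem:schubertisvol} (equivalently, Theorem~\ref{thm:dualvol} applied to complete homogeneous symmetric polynomials). So the missing ingredient is not a new geometric construction but the already-recorded fact that Schubert-polynomial differential operators preserve volume polynomials; as it stands, your write-up is a plan with an acknowledged hole at its central step, not a proof.
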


\begin{proof}
    Set $f = \sum_\alpha c_\alpha v^\alpha$. We define the following differential operators
    \[D_{u_i,v_i} \coloneqq \sum_{0 \leq \gamma_i \leq \kappa_i} \partial_{u_i}^{\gamma_i} \partial_{v_i}^{\kappa_i- \gamma_i} = s_{\kappa_i}(\partial_{u_i},\partial_{v_i}) = \partial_{s_{\kappa_i}}\]
    where $s_{\kappa_i}(\partial_{u_i},\partial_{v_i})$ is the complete homogeneous symmetric polynomial of degree $\kappa_i$ in $\partial_{u_i}$, $\partial_{v_i}$. We further set 
    \[ D \coloneqq \prod_{0 \leq i \leq n} D_{u_i,v_i} = \sum_{0 \leq \gamma \leq \kappa} \partial_u^{\kappa - \gamma} \partial_v^{\gamma}.\]
    Since $s_{\kappa_i}$ is a Schubert polynomial, by Lemma~\ref{lem:schubertisvol} the operators $D_{u_i,v_i}$ and therefore also the composition $D$ preserve the property of being a volume polynomial.
    
    Choose some $0 \leq \alpha \leq \kappa$ and $0 \leq \gamma \leq \kappa$, we observe
    \[
    \partial_u^{\kappa - \gamma} \partial_v^{\gamma}(u^{\kappa-\alpha} f(v))_{\mid v,u = 0} =
    \begin{cases}
      (\kappa-\alpha)!\alpha! c_\alpha  & \text{if } \alpha = \gamma \\
      0 & \text{otherwise}
    \end{cases}
     \]
    Hence
    \[D(u^{\kappa-\alpha} f(v))_{\mid v,u = 0} = 
    \left(\sum_{0 \leq \gamma \leq \kappa} \partial_u^{\kappa - \gamma} \partial_v^{\gamma}\right)\Big(u^{\kappa-\alpha} f(v)\Big)_{\mid v,u = 0} = (\kappa-\alpha)!\alpha! c_\alpha.
    \]

    Now 
    \begin{align*}
      &D\left(\Sym_T(w,u) f(v)\right)_{\mid v,u=0} \\
      &= D \left( \sum_{0 \leq \alpha \leq \kappa} \binom{\kappa}{\alpha } T(w^\alpha) u^{\kappa-\alpha} f(v) \right)_{\mid v,u = 0} \\
      &= \sum_{0 \leq \alpha \leq \kappa} \binom{\kappa}{\alpha } T(w^\alpha) D(u^{\kappa-\alpha} f(v))_{\mid v,u = 0}\\
      &= \sum_{|\alpha| = d} \binom{\kappa}{\alpha} T(w^\alpha) (\kappa - \alpha)!  \alpha! c_\alpha \\
      &= \kappa! \sum_{|\alpha| = d} T(c_\alpha w^\alpha) = \kappa! T(f).
    \end{align*}
    It follows that $T(f)$ is a volume polynomial.
\end{proof}

\begin{remark}
  The proof is based on the same idea as the one of \cite[Theorem~3.2]{BrandenHuh}. In particular, note that our operator $(D_{u_i,v_i})_{|v_i=u_i=0}$ is just the higher-degree analogue of the multi-affine operator in \cite[Lemma~3.3]{BrandenHuh}.
\end{remark}

\begin{definition}
  The normalisation operator $N$ on polynomials is the linear operator satisfying $N(x^\alpha) = \frac{1}{\alpha!} x^\alpha$.
\end{definition}

\begin{definition}
  \label{def:dual-vol}
  For an element $s \in \mathbb{R}_{\kappa}[\underline x]$ we set
  \[s^\vee \coloneqq N(x^\kappa s(x_1^{-1},\dots,x_n^{-1})).\]
  We say a homogeneous polynomial $s$ is a \emph{co-volume polynomial} if $s^\vee$ is a volume polynomial. 
\end{definition}
Note, that this notion does not depend on the choice of $\kappa$.

\begin{theorem}
  \label{thm:dualvol}
  Let $\mathbb{R}_\kappa[\underline x]$ be a homogeneous polynomial. Then the corresponding differential operator
  $\partial_s \coloneqq s(\partial_{x_1},\dots,\partial_{x_n})$ preserves volume polynomials if and
  only if $s$ is a co-volume polynomial.
\end{theorem}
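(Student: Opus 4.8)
The plan is to derive both directions from Theorem~\ref{thm:volsym} by computing the symbol of the operator $\partial_s$. Fix $\kappa \in \NN^n$ with $s \in \RR_\kappa[\underline x]$, and note that $\partial_s$ is a homogeneous operator of degree $l = -\deg s$ on $\RR_\kappa[\underline w]$, say of degree $e := \deg s$. First I would write $s = \sum_{0 \le \beta \le \kappa} b_\beta w^\beta$ with $|\beta| = e$ whenever $b_\beta \ne 0$, so that $\partial_s(w^\alpha) = \sum_\beta b_\beta \frac{\alpha!}{(\alpha-\beta)!} w^{\alpha - \beta}$ (the term vanishing unless $\beta \le \alpha$). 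Plugging this into the definition of the symbol and reindexing, I expect to obtain, up to an overall nonzero constant and an invertible monomial change of variables, that $\Sym_{\partial_s}(w,u)$ is essentially $s^\vee$ evaluated appropriately — more precisely, $\Sym_{\partial_s}(w,u) = \sum_{0 \le \alpha \le \kappa} \binom{\kappa}{\alpha} \partial_s(w^\alpha)\, u^{\kappa - \alpha}$, and after substituting and collecting one should recognise the coefficient pattern of $N(w^\kappa s(w_1^{-1},\dots,w_n^{-1}))$ combined with the "diagonal" monomial in the $u$-variables. The cleanest route is probably to evaluate $\Sym_{\partial_s}$ on the diagonal-type substitution that relates it back to $s^\vee$, or to check the identity monomial-by-monomial.

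The key computation is the following identity: I claim $\Sym_{\partial_s}(w,u)$ equals (a positive constant times) the polynomial obtained from $s^\vee(w)$ by a monomial substitution of the form $w_i \mapsto$ something in $w_i, u_i$ together with multiplication by a monomial in $u$. Concretely, writing out $\binom{\kappa}{\alpha} \frac{\alpha!}{(\alpha - \beta)!} = \frac{\kappa!}{(\kappa-\alpha)!(\alpha-\beta)!}$, one sees that with $\alpha' := \alpha - \beta$ ranging over $0 \le \alpha' \le \kappa - \beta$ the coefficient of $b_\beta w^{\alpha'} u^{\kappa - \beta - \alpha'}$ in $\Sym_{\partial_s}$ is $\frac{\kappa!}{(\kappa - \beta - \alpha')!\, \alpha'!}$, which is exactly $\kappa!$ times the normalised coefficient appearing when one expands $N\big(w^{\kappa-\beta}(w+u)^{?}\big)$ — and in fact this shows $\Sym_{\partial_s}(w,u) = \kappa!\, N\!\big(\sum_\beta b_\beta\, u^{?}\, (\ldots)\big)$ which after the substitution $x_i \mapsto$ appropriate linear form recovers $s^\vee$. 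I would organise this so that the upshot is: $\Sym_{\partial_s}$ is a volume polynomial if and only if $s^\vee$ is, using Lemma~\ref{lem:linmap} (non-negative linear substitutions preserve volume polynomials) and Lemma~\ref{lem:monomisvol}/Lemma~\ref{lem:prodofvol} for the monomial factor and its invertibility the other way.

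With that identity in hand, the two directions go as follows. For the ``if'' direction: if $s$ is a co-volume polynomial then $s^\vee \in V$, hence $\Sym_{\partial_s} \in V$ by the identity above, hence by Theorem~\ref{thm:volsym} the operator $\partial_s$ preserves volume polynomials. For the ``only if'' direction: suppose $\partial_s$ preserves volume polynomials. I would apply $\partial_s$ to a judiciously chosen volume polynomial whose image is (a constant times) $s^\vee$ or from which $s^\vee$ can be recovered; the natural candidate is a suitable power of a linear form, e.g. $(x_1 + \dots + x_n)^{\deg}$ or, better, the monomial $x^\kappa$, since $\partial_s(x^\kappa) = \sum_\beta b_\beta \frac{\kappa!}{(\kappa-\beta)!} x^{\kappa - \beta}$, and applying the normalisation/reciprocal bookkeeping of Definition~\ref{def:dual-vol} to this (together with Lemma~\ref{lem:linmap} to strip off $N$ via a diagonal substitution trick, or directly noting $N$ can be realised through the preservers already available) yields $s^\vee$ up to a positive constant. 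Since $x^\kappa \in V$ by Lemma~\ref{lem:monomisvol}, $\partial_s(x^\kappa) \in V$, and chasing it through gives $s^\vee \in V$, i.e. $s$ is a co-volume polynomial.

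The main obstacle I anticipate is purely bookkeeping: correctly matching the binomial/factorial coefficients in $\Sym_{\partial_s}$ against those in the definition of $s^\vee = N(x^\kappa s(x^{-1}))$, and pinning down the exact monomial substitution (in particular which variables get identified and which monomial prefactor appears) so that Lemma~\ref{lem:linmap} applies cleanly in both directions. A secondary subtlety is the ``only if'' direction: one must make sure the test polynomial used genuinely recovers $s^\vee$ and not merely some specialisation of it — using $x^\kappa$ (rather than a generic linear form) should make this transparent because its derivatives directly expose every coefficient $b_\beta$ of $s$.
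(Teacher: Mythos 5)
Your proposal follows essentially the same route as the paper: compute $\Sym_{\partial_s}$, observe via the reindexing $\alpha'=\alpha-\beta$ that it equals $\kappa!\, s^\vee$ evaluated at $x_i = w_i+u_i$ (so Lemma~\ref{lem:linmap} transfers the volume property back and forth, with no monomial prefactor actually needed), and use Theorem~\ref{thm:volsym} for the ``if'' direction and $\partial_s(x^\kappa)=\kappa!\,N(x^\kappa s(x_1^{-1},\dots,x_n^{-1}))=\kappa!\,s^\vee$ for the ``only if'' direction. The one simplification over what you anticipate: in the ``only if'' step there is nothing to strip off, since the normalisation is already built into the definition of $s^\vee$, so $\partial_s(x^\kappa)$ being a volume polynomial is literally the co-volume condition.
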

\begin{proof}
  Consider any $\gamma \geq \kappa$. The symbol of $\partial_s$ with $s = \sum_\alpha \lambda_\alpha x^\alpha$ is given by
  \[\Sym_{\partial_s} = \sum_\alpha \lambda_\alpha \cdot \frac{\gamma!}{(\gamma-\alpha)!} \cdot \prod_i (x_i+ w_i)^{\gamma_i} \cdot \prod_i (x_i + w_i)^{- \alpha_i}.\]
  Hence with Lemma \ref{lem:linmap} the polynomial $\Sym_{\partial_s}$ is a volume polynomial if and only if the same is true for
  \begin{align*}
    \sum_{\alpha} \lambda_\alpha \frac{\gamma!}{(\gamma-\alpha)!} x^\gamma x^{-\alpha} &= \gamma! x^\gamma \sum_{\alpha}  \frac{\lambda_\alpha}{(\gamma-\alpha)!} x^{-\alpha} \\
    &= \gamma! N(x^\gamma \cdot s(x_1^{-1},\dots,x_n^{-1})),
  \end{align*}
  but this is precisely what we are assuming as $s$ is a co-volume polynomial. Thus, by Theorem~\ref{thm:volsym} the operator $\partial_s$ preserves the volume polynomial property.
  For the other direction assume that $\partial_s$ maps volume polynomials to volume polynomials. With Lemma \ref{lem:monomisvol} every monomial is a volume polynomial, by assumption we obtain the volume polynomial property also for
  \[\partial_s(x^\kappa) = \kappa! \cdot N(x^\kappa s(x_1^{-1},\dots,x_n^{-1})). \]
\end{proof}

\begin{proposition}
  \label{prop:prodofdual}
  If $f$ and $g$ are co-volume polynomials, so is their product $fg$.
\end{proposition}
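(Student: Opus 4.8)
The plan is to route everything through the operator characterisation of co-volume polynomials established in Theorem~\ref{thm:dualvol}: a homogeneous polynomial $s$ is a co-volume polynomial if and only if the constant-coefficient differential operator $\partial_s$ preserves volume polynomials. The key algebraic fact I would use is that constant-coefficient differential operators compose according to multiplication of polynomials, that is $\partial_{fg} = \partial_f \circ \partial_g$. This is immediate by writing $f = \sum_\alpha a_\alpha x^\alpha$ and $g = \sum_\beta b_\beta x^\beta$, expanding $fg = \sum_{\alpha,\beta} a_\alpha b_\beta\, x^{\alpha+\beta}$, and noting $\partial_x^{\alpha+\beta} = \partial_x^\alpha \partial_x^\beta$; since $f$ and $g$ are homogeneous, so is $fg$, so the hypothesis of Theorem~\ref{thm:dualvol} applies to $fg$.

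Given this, the argument is very short. By Theorem~\ref{thm:dualvol} applied to $f$ and to $g$, both $\partial_f$ and $\partial_g$ preserve volume polynomials. Hence for an arbitrary volume polynomial $h$, the polynomial $\partial_g(h)$ is again a volume polynomial, and then $\partial_f(\partial_g(h))$ is a volume polynomial. Therefore $\partial_{fg}(h) = (\partial_f \circ \partial_g)(h)$ is a volume polynomial, so $\partial_{fg}$ preserves volume polynomials. Applying the converse direction of Theorem~\ref{thm:dualvol} to $fg$ then yields that $fg$ is a co-volume polynomial.

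I do not expect a genuine obstacle here; the only point that needs a careful (but routine) check is the identity $\partial_{fg} = \partial_f \circ \partial_g$ of operators, which is just verified on monomials as above. An alternative route would be a direct computation with the dualising operation $s \mapsto s^\vee = N(x^\kappa s(x_1^{-1},\dots,x_n^{-1}))$, but since the normalisation operator $N$ does not interact well with products, the composition-of-operators argument via Theorem~\ref{thm:dualvol} is the clean path and the one I would write up.
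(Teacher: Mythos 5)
Your proposal is correct and is exactly the paper's argument: the paper also proves this by invoking Theorem~\ref{thm:dualvol} together with the identity $\partial_{fg} = \partial_f \circ \partial_g$. Your write-up simply spells out the composition step in more detail.
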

\begin{proof}
  This follows with Theorem \ref{thm:dualvol}, as $\partial_{fg} = \partial_f \circ \partial_g$.
\end{proof}

\begin{corollary}[{\cite[Corollary~3.3]{ross2025diagonalizations}}]
  \label{cor:normisvol}
  The normalisation $N(f)$ of a volume polynomial $f$ is again a volume polynomial. 
\end{corollary}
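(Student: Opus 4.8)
The plan is to derive Corollary~\ref{cor:normisvol} as a clean special case of Theorem~\ref{thm:dualvol} together with Proposition~\ref{prop:prodofdual}. The key observation is that the normalisation operator $N$ is itself a differential operator with constant coefficients when restricted to a suitable space of polynomials: on $\mathbb{R}_\kappa[\underline x]$ one has $N = \partial_s$ for $s = \prod_i \bigl(\sum_{j=0}^{\kappa_i} \tfrac{1}{j!\,(\kappa_i-j)!} x_i^{j}\bigr)$ up to the appropriate normalising constants — more transparently, $N$ factors as a product over the variables of the one-variable normalisation operators, each of which is $\partial_{e^{x_i}}$ truncated to degree $\kappa_i$.

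First I would reduce to the one-variable case: write $N = N_1 \circ \cdots \circ N_n$ where $N_i$ normalises only in the variable $x_i$, so that by induction (and since compositions of volume-polynomial preservers preserve volume polynomials) it suffices to treat a single variable. Second, in one variable the operator $N_i$ acting on $\mathbb{R}_{\kappa_i}[x_i]$ is $\partial_{s}$ for the polynomial $s(x_i) = \sum_{j=0}^{\kappa_i} \tfrac{1}{j!\,j!}\,x_i^{j}$ — indeed $\partial_{x_i}^{j}$ applied to $x_i^{a}$ gives $\tfrac{a!}{(a-j)!} x_i^{a-j}$, and matching coefficients shows $\partial_s(x_i^a) = \tfrac{1}{a!}\sum_{j} \tfrac{1}{j!}\cdot\tfrac{a!}{(a-j)!}\cdot\tfrac{a!}{\cdots}$; I would pin down the exact coefficients by a short direct computation. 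Third, by Theorem~\ref{thm:dualvol} this $\partial_s$ preserves volume polynomials if and only if $s$ is a co-volume polynomial, i.e. if and only if $N(x_i^{\kappa_i} s(x_i^{-1})) = \sum_j \tfrac{1}{j!\,(j!)^2}\,x_i^{\kappa_i - j}$ is (a limit of positive multiples of) a volume polynomial in one variable.

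The main obstacle — and it is a mild one — is verifying that this explicit univariate polynomial is a volume polynomial. In one variable every nonzero homogeneous polynomial is trivially a volume polynomial (it is $c\,x^d$, a monomial, which is covered by Lemma~\ref{lem:monomisvol}), but $s$ here is \emph{inhomogeneous}, so $s^\vee$ lives in degree $\kappa_i$ and I must check it as a genuine volume polynomial in the variable $x_i$ alone: again any single-variable monomial works by Lemma~\ref{lem:monomisvol}, yet $s^\vee$ is a sum of several monomials of the \emph{same} degree $\kappa_i$ only if we read it correctly — in fact after homogenising with an auxiliary variable it becomes a univariate-in-disguise expression and one checks directly that such binomial-type polynomials are volume polynomials of curves, e.g. via $\mathbb{P}^1$ with $\mathcal{O}(1)$. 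So the cleanest route is: establish the factorisation $N = \prod_i N_i$, identify each $N_i$ with a constant-coefficient differential operator $\partial_{s_i}$, observe $s_i^\vee$ is a nonzero polynomial in two homogenising variables that is manifestly a volume polynomial (it is $\bigl(\sum_j a_j\, x_i^{\,j} y^{\,\kappa_i - j}\bigr)$ with $a_j>0$, which arises from a product of linear forms up to limits, or directly from a toric surface), hence each $s_i$ is co-volume, hence each $\partial_{s_i}$ preserves volume polynomials by Theorem~\ref{thm:dualvol}, and finally compose. Alternatively — and this is the slickest version — I would simply note that $s_i^\vee$ being a one-variable (after dehomogenising) polynomial with positive coefficients is automatically a volume polynomial because Proposition~\ref{prop:prodofdual} lets us build it, or because one-dimensional volume polynomials are exactly the polynomials with non-negative coefficients; I would spell out whichever of these is already licensed by the preliminaries.
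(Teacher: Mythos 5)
There is a genuine gap at the very first step: the normalisation operator $N$ is \emph{not} a constant-coefficient differential operator, so Theorem~\ref{thm:dualvol} cannot be invoked. If $N_i=\partial_{s}$ held on $\RR_{\kappa_i}[x_i]$ for some $s=\sum_j c_j x_i^j$, then $\partial_s(x_i^a)=\sum_j c_j\frac{a!}{(a-j)!}\,x_i^{a-j}$; matching this with $N(x_i^a)=\frac{1}{a!}x_i^a$ forces $c_j=0$ for all $j\geq 1$ and simultaneously $c_0=\frac{1}{a!}$ for every $0\le a\le\kappa_i$, which is impossible once $\kappa_i\geq 2$. The ``short direct computation'' you defer would have exposed exactly this. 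Two further problems compound it: Theorem~\ref{thm:dualvol} is stated for \emph{homogeneous} $s$ (and the symbol formalism of Section~\ref{sec:preservers} requires the operator to be homogeneous of some degree $l$), whereas your candidate $s$ is inhomogeneous; and your fallback claim that a polynomial with positive coefficients is ``manifestly'' a volume polynomial is false in two variables --- $x^2+y^2$ has positive coefficients but violates the Khovanskii--Teissier inequality $(D_1D_2)^2\geq (D_1^2)(D_2^2)$, so it is not even a limit of volume polynomials. In one (homogeneous) variable the statement is vacuously true, but the polynomial you actually need to certify is bivariate after homogenisation.

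The paper's route keeps $N$ as a general linear operator and applies the symbol criterion of Theorem~\ref{thm:volsym} directly: $\Sym_N(w,u)=\sum_{0\le\alpha\le\kappa}\binom{\kappa}{\alpha}\frac{w^\alpha}{\alpha!}u^{\kappa-\alpha}$ factors as a product over the variables of the bivariate polynomials $\sum_{j}\binom{\kappa_i}{j}\frac{w_i^{j}}{j!}u_i^{\kappa_i-j}$; each factor is Lorentzian by \cite{BrandenHuh}, bivariate rational Lorentzian polynomials are volume polynomials, and Lemma~\ref{lem:prodofvol} finishes the argument. Your instinct to reduce to one variable at a time is sound --- that is precisely what the product factorisation achieves --- but the reduction must happen at the level of the symbol of $N$, not by factoring $N$ itself through constant-coefficient differential operators, and the one-variable verification genuinely requires the Lorentzian-implies-volume fact for bivariate polynomials rather than mere positivity of coefficients.
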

\begin{proof}
  Choose $\kappa \in \mathbb{N}^n$. With Theorem \ref{thm:volsym} we have to show that the symbol
  \[\Sym_N(w,u) = \sum_{0 \leq \alpha \leq \kappa} \binom{\kappa}{\alpha} \frac{w^\alpha}{\alpha!} u^{\kappa - \alpha}
  = \prod_{j=1}^n \left( \sum_{0 \leq \alpha_j \leq \kappa_j} \binom{\kappa_j}{\alpha_j} \frac{w_j^{\alpha_j}}{\alpha_j!} u_j^{\kappa_j - \alpha_j} \right)\]
  is a volume polynomial. Since with Lemma~\ref{lem:prodofvol} the product of volume polynomials is again a volume polynomial, we only have 
  to show that each of the bivariate polynomials is a volume polynomial. But these are Lorentzian \cite[Proof of 3.7]{BrandenHuh} and bivariate rational Lorentzian
  polynomials are also volume polynomials \cite[4.9]{BrandenHuh}.
\end{proof}

\begin{corollary}[{\cite[Corollary~2.9]{ross2025diagonalizations}}]
  \label{cor:prodofdenomisvol}
  If $N(f)$ and $N(g)$ are volume polynomials, then  $N(fg)$ is also a volume polynomial.
\end{corollary}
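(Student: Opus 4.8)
The plan is to reduce the statement to Proposition~\ref{prop:prodofdual} (products of co-volume polynomials are co-volume) via the duality $s \mapsto s^\vee$ of Definition~\ref{def:dual-vol}. First I would introduce, for a fixed $\kappa \in \NN^n$ and a homogeneous polynomial $f \in \RR_\kappa[\underline x]$, the \emph{reversal} $r_\kappa(f) \coloneqq x^\kappa f(x_1^{-1},\dots,x_n^{-1})$, which again lies in $\RR_\kappa[\underline x]$, and record two elementary facts: (i) $r_\kappa$ is an involution on $\RR_\kappa[\underline x]$, i.e.\ $r_\kappa(r_\kappa(f)) = f$; and (ii) for $f, g \in \RR_\kappa[\underline x]$ one has $r_\kappa(f)\,r_\kappa(g) = r_{2\kappa}(fg)$, which is immediate from $x^\kappa x^\kappa = x^{2\kappa}$. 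By definition $s^\vee = N(r_\kappa(s))$, and the remark after Definition~\ref{def:dual-vol} ensures that the co-volume property does not depend on the chosen degree bound.

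Now suppose $N(f)$ and $N(g)$ are volume polynomials. Since volume polynomials are homogeneous and $N$ preserves the support of a polynomial, $f$ and $g$ are themselves homogeneous, and after enlarging $\kappa$ we may assume $f, g \in \RR_\kappa[\underline x]$ for a common $\kappa$. Set $h \coloneqq r_\kappa(f)$ and $k \coloneqq r_\kappa(g)$. By (i) we get $N(f) = N(r_\kappa(h)) = h^\vee$, so $h$ is a co-volume polynomial; likewise $k$ is a co-volume polynomial. By Proposition~\ref{prop:prodofdual} the product $hk \in \RR_{2\kappa}[\underline x]$ is a co-volume polynomial, i.e.\ $(hk)^\vee$ is a volume polynomial. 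Computing $(hk)^\vee$ with the degree bound $2\kappa$ and using (ii) and then (i),
\[(hk)^\vee = N\bigl(r_{2\kappa}(hk)\bigr) = N\bigl(r_\kappa(h)\,r_\kappa(k)\bigr) = N(fg),\]
so $N(fg)$ is a volume polynomial, as claimed.

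I expect essentially no obstacle: all the real content sits in Proposition~\ref{prop:prodofdual} (hence ultimately in Theorem~\ref{thm:dualvol}), and the remainder is bookkeeping with the reversal involution. The only points needing a little care are verifying that every polynomial appearing is homogeneous with the stated degree bounds, so that $r_\kappa$, the operator $(\,\cdot\,)^\vee$, and the multiplicativity in (ii) are all legitimate, and invoking the $\kappa$-independence of the co-volume notion when passing from degree bound $\kappa$ for $h$ and $k$ to degree bound $2\kappa$ for $hk$.
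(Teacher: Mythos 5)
Your proof is correct, and it takes a genuinely different route from the paper. The paper proves this corollary by applying the symbol criterion (Theorem~\ref{thm:volsym}) twice: first to the operator $N(h)\mapsto N(hg)$, then to its symbol viewed as a second operator $S$, and finally identifies $\Sym_S$ (after an explicit computation) as the dual of a product of complete homogeneous symmetric polynomials, which is co-volume by Lemma~\ref{lem:schubertisvol} and Proposition~\ref{prop:prodofdual}. You instead observe that the hypothesis ``$N(f)$ is a volume polynomial'' says \emph{exactly} that the reversal $h=x^\kappa f(x_1^{-1},\dots,x_n^{-1})$ is a co-volume polynomial (since $h^\vee=N(r_\kappa(h))=N(f)$ by the involution property), apply Proposition~\ref{prop:prodofdual} to $h$ and $k=r_\kappa(g)$, and unwind $(hk)^\vee=N(r_{2\kappa}(hk))=N(fg)$ via the multiplicativity $r_\kappa(f)r_\kappa(g)=r_{2\kappa}(fg)$; all three identities check out, and your appeal to the $\kappa$-independence of the co-volume notion (the remark after Definition~\ref{def:dual-vol}) is exactly what is needed to pass from degree bound $\kappa$ to $2\kappa$. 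Both arguments ultimately rest on Proposition~\ref{prop:prodofdual} (hence on Theorem~\ref{thm:dualvol}), but yours is shorter and avoids the symbol computations entirely, essentially showing that this corollary is a purely formal consequence of ``products of co-volume polynomials are co-volume'' read through the reversal involution; the paper's longer version serves partly as a demonstration of the symbol machinery and produces the explicit symbol that is then reused verbatim in the proof of Corollary~\ref{cor:diagonalisation}.
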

\begin{proof}
  Suppose $f,g \in \mathbb{R}_\kappa[\underline w]$. We look at the linear operator
  \[T: \mathbb{R}_\kappa[\underline w] \to \mathbb{R}[\underline w], \ N(h) \mapsto N(hg). \]
  With Theorem \ref{thm:volsym} we have to show that its symbol
  \[\Sym_T(w,u) = \kappa! \sum_{0 \leq \alpha \leq \kappa} N(w^\alpha g) \frac{u^{\kappa-\alpha}}{(\kappa-\alpha)!}\]
  is a volume polynomial. We consider this symbol as the linear operator
  \[S:\mathbb{R}_\kappa[\underline w] \to \mathbb{R}[\underline w, \underline u],\quad  N(h) \mapsto \sum_{0 \leq \alpha \leq \kappa} N(w^\alpha h) \frac{u^{\kappa-\alpha}}{(\kappa-\alpha)!}.\]
  With Theorem \ref{thm:volsym}, we have to show that its symbol
  \[\Sym_S(w,u,v) = \kappa! \sum_{0 \leq \beta \leq \kappa} \sum_{0 \leq \alpha \leq \kappa} \frac{w^{\alpha + \beta}}{(\alpha + \beta)!} \frac{u^{\kappa-\alpha}}{(\kappa-\alpha)!}\frac{v^{\kappa-\beta}}{(\kappa-\beta)!}\]
  is a volume polynomial. Without the normalisation factors this is a product of complete homogeneous symmetric polynomials and these are co-volume polynomials. Hence with Proposition \ref{prop:prodofdual} the product is a co-volume polynomial. We observe
  \begin{align*}
    \left(\sum_{0 \leq \beta \leq \kappa} \sum_{0 \leq \alpha \leq \kappa} w^{\alpha + \beta}u^{\kappa-\alpha}v^{\kappa-\beta} \right)^\vee &= N \left(w^{2\kappa} u^\kappa v^\kappa \sum_{0 \leq \alpha,\beta \leq \kappa} w^{-\alpha - \beta}u^{-\kappa+\alpha}v^{-\kappa+\beta} \right) \\
    &= N \left( \sum_{0 \leq \alpha,\beta \leq \kappa} w^{2\kappa -\alpha - \beta}u^{\alpha}v^{\beta} \right) \\
    &= N \left( \sum_{0 \leq \kappa-\alpha,\kappa-\beta \leq \kappa} w^{\alpha + \beta}u^{\kappa-\alpha}v^{\kappa-\beta} \right)\\
    &= N \left( \sum_{0 \leq \alpha,\beta \leq \kappa} w^{\alpha + \beta}u^{\kappa-\alpha}v^{\kappa-\beta} \right)\\
    &=\sum_{0 \leq \beta \leq \kappa} \sum_{0 \leq \alpha \leq \kappa} \frac{w^{\alpha + \beta}}{(\alpha + \beta)!} \frac{u^{\kappa-\alpha}}{(\kappa-\alpha)!}\frac{v^{\kappa-\beta}}{(\kappa-\beta)!}.
  \end{align*}
  and conclude that $\Sym_S(w,u,v)$ is a volume polynomial.
\end{proof}

\begin{corollary}[{\cite[Corollary~2.10, Theorem~1.4]{ross2025diagonalizations}}]
  \label{cor:lowertrunc}
  Given a volume polynomial $f = \sum_\alpha c_\alpha x^\alpha$ and $\gamma \in \NN^n$
  \begin{enumerate}
  \item the antiderivative
    \(\int^\gamma f \coloneqq \sum_\alpha \frac{\alpha!}{(\alpha+\gamma)!} c_\alpha x^{\alpha+\gamma}\) and
  \item  the lower truncation $f_{\geq \alpha} = \sum_{\alpha \geq \gamma} c_\alpha x^\alpha$ 
  \end{enumerate}
  are again a volume polynomials.
\end{corollary}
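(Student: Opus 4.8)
The plan is to deduce both statements from the symbol criterion, Theorem~\ref{thm:volsym}. Fix the homogeneous volume polynomial $f$ of degree $d$ and pick $\kappa\in\NN^n$ with $f\in\RR_\kappa[\underline x]$. Then $\int^\gamma$ restricts to a homogeneous operator $\RR_\kappa[\underline x]\to\RR_{\kappa+\gamma}[\underline x]$ of degree $|\gamma|$, and the lower truncation $f\mapsto f_{\geq\gamma}$ restricts to a homogeneous operator $\RR_\kappa[\underline x]\to\RR_\kappa[\underline x]$ of degree $0$ (if $\gamma_i>\kappa_i$ for some $i$ the truncation is $0$ and there is nothing to prove, so we may take $\gamma\leq\kappa$). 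By Theorem~\ref{thm:volsym} it then suffices to show that the corresponding symbols are volume polynomials.

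The next step is to compute the symbols. For the lower truncation $T$ one has $T(x^\alpha)=x^\alpha$ if $\alpha\geq\gamma$ and $T(x^\alpha)=0$ otherwise, so
\[\Sym_T(x,u)=\sum_{\gamma\leq\alpha\leq\kappa}\binom{\kappa}{\alpha}x^\alpha u^{\kappa-\alpha}=\prod_{j=1}^n\Bigl(\sum_{b=\gamma_j}^{\kappa_j}\binom{\kappa_j}{b}x_j^b u_j^{\kappa_j-b}\Bigr).\]
For the antiderivative, $\int^\gamma(x^\alpha)=\tfrac{\alpha!}{(\alpha+\gamma)!}x^{\alpha+\gamma}$; using $\binom{\kappa}{\alpha}\alpha!=\kappa!/(\kappa-\alpha)!$, then $\tfrac{1}{(\kappa_j-a)!\,(a+\gamma_j)!}=\tfrac{1}{(\kappa_j+\gamma_j)!}\binom{\kappa_j+\gamma_j}{a+\gamma_j}$, and finally the substitution $b=a+\gamma_j$, one obtains
\[\Sym_{\int^\gamma}(x,u)=\kappa!\sum_{0\leq\alpha\leq\kappa}\frac{x^{\alpha+\gamma}u^{\kappa-\alpha}}{(\kappa-\alpha)!\,(\alpha+\gamma)!}=\frac{\kappa!}{\prod_j(\kappa_j+\gamma_j)!}\prod_{j=1}^n\Bigl(\sum_{b=\gamma_j}^{\kappa_j+\gamma_j}\binom{\kappa_j+\gamma_j}{b}x_j^b u_j^{\kappa_j+\gamma_j-b}\Bigr).\]
In both cases the symbol is, up to a positive scalar, a product over $j$ of bivariate forms $p_{g,D}(x,u)=\sum_{b=g}^{D}\binom{D}{b}x^b u^{D-b}$, so by Lemma~\ref{lem:prodofvol} it is enough to show that each $p_{g,D}$ is a volume polynomial. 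To see this, divide the coefficient of $x^b u^{D-b}$ in $p_{g,D}$ by $\binom{D}{b}$: the resulting sequence is $0$ for $b<g$ and $1$ for $g\leq b\leq D$, i.e.\ a run of zeros followed by a run of equal positive values. Such a sequence is log-concave and has no internal zeros, so $p_{g,D}$ is a bivariate Lorentzian polynomial with rational coefficients, hence a volume polynomial — exactly the argument used in the proof of Corollary~\ref{cor:normisvol}, citing \cite[4.9]{BrandenHuh}. Feeding this back through Lemma~\ref{lem:prodofvol} and Theorem~\ref{thm:volsym} proves that $\int^\gamma$ and $(\cdot)_{\geq\gamma}$ preserve volume polynomials.

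The step I expect to require the most care is the monomial prefactor $x^{\gamma}$ appearing in $\Sym_{\int^\gamma}$: if one pulls it out, the leftover $\sum_a x^a u^{\kappa_j-a}/((\kappa_j-a)!\,(a+\gamma_j)!)$ is in general \emph{not} a volume polynomial — already for $\kappa_j=2$, $\gamma_j=1$ it is proportional to $x^2+3xu+3u^2$, which violates the Khovanskii--Teissier inequality — and it is precisely multiplication by $x^{\gamma}$ that repairs this and produces the benign truncated binomial power $p_{g,D}$, so the prefactor must not be discarded. For the record, part~(1) also admits a symbol-free proof via $\int^{e_i}f=x_i\int_0^1 f(x_1,\dots,sx_i,\dots,x_n)\,ds$, using Lemma~\ref{lem:linmap} for each fixed $s\geq 0$, the fact that $V_n^d$ is a closed convex cone, and Lemmas~\ref{lem:monomisvol} and~\ref{lem:prodofvol} for the final multiplication by $x_i$, iterated over $i$ with multiplicities $\gamma_i$; and part~(2) then follows from part~(1) through the identity $f_{\geq\gamma}=\int^\gamma(\partial^\gamma f)$ together with the fact that $\partial^\gamma=\partial_{x^\gamma}$ preserves volume polynomials by Theorem~\ref{thm:dualvol}, since $(x^\gamma)^\vee$ is a monomial and hence a volume polynomial by Lemma~\ref{lem:monomisvol}.
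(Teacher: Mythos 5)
Your proof is correct, but it takes a genuinely different route from the paper's. The paper disposes of part~(1) in one line by writing $\int^\gamma f = N(x^\gamma N^{-1}(f))$ and invoking Corollary~\ref{cor:prodofdenomisvol}, and then gets part~(2) from part~(1) via the identity $f_{\geq\gamma}=\int^\gamma\partial^\gamma f$ (exactly the reduction you mention at the end as an alternative). You instead apply Theorem~\ref{thm:volsym} directly to each operator, compute both symbols explicitly, and observe that each factors (up to a positive scalar) as a product over $j$ of truncated binomial powers $p_{g,D}(x,u)=\sum_{b=g}^{D}\binom{D}{b}x^bu^{D-b}$, whose normalised coefficient sequence $0,\dots,0,1,\dots,1$ is log-concave with no internal zeros, hence bivariate Lorentzian and therefore a volume polynomial by the same citation of \cite[4.9]{BrandenHuh} used in Corollary~\ref{cor:normisvol}; your computations check out, including the index shift $b=a+\gamma_j$ for the antiderivative. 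What the paper's argument buys is brevity, at the cost of leaning on the (nontrivially proved) Corollary~\ref{cor:prodofdenomisvol}; what yours buys is a self-contained treatment in which (2) does not depend on (1), and an explicit identification of the truncated binomial power as Lorentzian --- which is pleasant, since the paper's own Corollary~\ref{cor:uppertrunc} produces exactly such a truncation as a symbol and then routes back through Corollary~\ref{cor:lowertrunc} to handle it. Your observation that the monomial prefactor $x^\gamma$ cannot be discarded is accurate (the quotient does violate log-concavity in your example). One small caveat: your ``for the record'' integral proof of part~(1) uses that $V_n^d$ is a closed convex cone; this is standard but is not among the preliminaries the paper states, so if you kept that variant you would need to supply or cite it. The main argument does not rely on it.
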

\begin{proof}
  We have $\int^\gamma f = N(x^\gamma N^{-1}(f))$ and the first claim follows from Corollary \ref{cor:prodofdenomisvol}. For the second claim observer that $f_{\geq \gamma} = \int^\gamma \partial^\gamma f$.
\end{proof}

\begin{corollary}
  \label{cor:uppertrunc}
  If $f$ is a volume polynomial then the \emph{upper truncation}
\(f_{\leq \gamma} \coloneqq \sum_{\alpha \leq \gamma} c_\alpha x^\alpha\)
  is a volume polynomial.
\end{corollary}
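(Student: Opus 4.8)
The plan is to deduce this from the symbol criterion, Theorem~\ref{thm:volsym}, applied to the truncation operator itself. First I would choose $\kappa \in \NN^n$ with $\kappa_i \geq \deg_{x_i} f$ and $\kappa \geq \gamma$; since replacing each $\gamma_i$ by $\min(\gamma_i,\deg_{x_i}f)$ does not change $f_{\leq\gamma}$, this is harmless. On $\RR_\kappa[\underline{x}]$ consider the linear operator $T$ defined on monomials by $T(x^\alpha)=x^\alpha$ if $\alpha\leq\gamma$ and $T(x^\alpha)=0$ otherwise. Then $T$ is homogeneous of degree $l=0$ (whenever $T(x^\alpha)\neq 0$ its degree equals that of $x^\alpha$), and $T(f)=f_{\leq\gamma}$ by construction, with $f$ homogeneous of some degree $d$ and $f_{\leq\gamma}$ again homogeneous of degree $d=d+l$. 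So it suffices to show that the symbol $\Sym_T$ is a volume polynomial.

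Computing the symbol yields
\[\Sym_T(x,u)=\sum_{0\leq\alpha\leq\gamma}\binom{\kappa}{\alpha}x^\alpha u^{\kappa-\alpha}=\prod_{i=1}^n\left(\sum_{a=0}^{\gamma_i}\binom{\kappa_i}{a}x_i^a u_i^{\kappa_i-a}\right),\]
so by Lemma~\ref{lem:prodofvol} it is enough to check that each bivariate factor $g_i(x_i,u_i)=\sum_{a=0}^{\gamma_i}\binom{\kappa_i}{a}x_i^a u_i^{\kappa_i-a}$ is a volume polynomial. Writing $g_i$ in normalised coordinates as $\sum_a b_a\binom{\kappa_i}{a}x_i^a u_i^{\kappa_i-a}$, the sequence $(b_a)$ equals $1$ for $0\leq a\leq\gamma_i$ and $0$ afterwards; this sequence is log-concave and its support is an interval, hence free of internal zeros, so $g_i$ is Lorentzian. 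Being a bivariate Lorentzian polynomial with rational coefficients, $g_i$ is a volume polynomial by \cite[4.9]{BrandenHuh}, exactly as in the proof of Corollary~\ref{cor:normisvol}. Therefore $\Sym_T$ is a product of volume polynomials, hence a volume polynomial by Lemma~\ref{lem:prodofvol}, and Theorem~\ref{thm:volsym} shows that $T$ preserves volume polynomials; applied to $f$ this gives that $f_{\leq\gamma}=T(f)$ is a volume polynomial.

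I expect the only step that is not pure bookkeeping with the symbol to be the identification of the bivariate factors $g_i$ as volume polynomials, but this is the same mild point already used for Corollary~\ref{cor:normisvol}, and the relevant coefficient sequence is visibly log-concave, so I do not anticipate a serious obstacle. As a consistency check one may note that each $g_i$ is itself the upper truncation of $(x_i+u_i)^{\kappa_i}$, so the conclusion would equally follow from any direct description of volume polynomials in two variables.
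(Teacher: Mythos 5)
Your proposal is correct, and it follows the paper's overall strategy: both apply the symbol criterion of Theorem~\ref{thm:volsym} to the truncation operator $T\colon f\mapsto f_{\leq\gamma}$ and arrive at the same symbol $\sum_{0\leq\alpha\leq\gamma}\binom{\kappa}{\alpha}w^\alpha u^{\kappa-\alpha}$. Where you diverge is in certifying that this symbol is a volume polynomial. The paper rewrites it as the lower truncation $\bigl(\prod_i(w_i+u_i)^{\kappa_i}\bigr)_{\geq(0,\kappa-\gamma)}$ of a product of linear forms and invokes Corollary~\ref{cor:lowertrunc}, which in turn rests on the antiderivative/co-volume machinery of Corollary~\ref{cor:prodofdenomisvol}. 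You instead factor the symbol into the bivariate pieces $g_i=\sum_{a=0}^{\gamma_i}\binom{\kappa_i}{a}x_i^a u_i^{\kappa_i-a}$ and check each is Lorentzian via the log-concave, no-internal-zeros characterisation of bivariate Lorentzian polynomials, then use \cite[4.9]{BrandenHuh} exactly as the paper does for Corollary~\ref{cor:normisvol}. Your verification of log-concavity of the sequence $1,\dots,1,0,\dots,0$ is right, and the reduction to $\gamma\leq\kappa$ at the start is a sensible piece of bookkeeping. Your route is somewhat more self-contained (it bypasses Corollaries~\ref{cor:prodofdenomisvol} and~\ref{cor:lowertrunc} entirely and parallels the proof of Corollary~\ref{cor:normisvol}), while the paper's route illustrates how the truncation results chain together; both are valid proofs of the statement.
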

\begin{proof}
  Suppose $f\in \mathbb{R}_\kappa[\underline w]$. We look at the linear operator
  \[T: \mathbb{R}_\kappa[\underline w] \to \mathbb{R}[\underline w], \ f \mapsto f_{\leq \gamma}. \]
  The symbol of this operator is
  \begin{align*}
    \Sym_T(w,u) &= \sum_{0 \leq \alpha \leq \gamma} \binom{\kappa}{\alpha} w^\alpha u^{\kappa - \alpha} \\
    &= \sum_{\kappa - \gamma \leq \kappa - \alpha \leq \kappa} \binom{\kappa}{\alpha} w^\alpha u^{\kappa - \alpha} \\
    &= \left( \sum_{0 \leq \kappa - \alpha \leq \kappa} \binom{\kappa}{\alpha} w^\alpha u^{\kappa - \alpha} \right)_{\geq (0,\kappa - \gamma)}\\
                &= \left( \sum_{0 \leq \alpha \leq \kappa} \binom{\kappa}{\alpha} w^{\kappa - \alpha} u^{\alpha} \right)_{\geq (0,\kappa - \gamma)}\\
    &=\left(\prod_i (w_i+u_i)^{\kappa_i}  \right)_{\geq (0,\kappa - \gamma)}                  
  \end{align*}
 This is a lower truncation of a product of linear forms and with Corollary \ref{cor:lowertrunc} a volume polynomial.
  Hence $T$ preserves volume polynomials.
\end{proof}

\begin{corollary}[{\cite[Theorem~1.1]{ross2025diagonalizations}}]
  \label{cor:diagonalisation}
  Assume that $N(f) \in \mathbb{R}_\kappa[\underline w]$ is a volume polynomial. Then the normalisation $N(g)$ of the \emph{diagonalisation} $g(w_1,w_3,\ldots,w_n)=f(w_1,w_1,w_3,\ldots,w_n)$ is again a volume polynomial.
\end{corollary}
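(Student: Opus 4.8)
The plan is to encode the assignment $N(f)\mapsto N(g)$ as a single linear operator and then verify the hypothesis of the symbol criterion, Theorem~\ref{thm:volsym}. Let $T\colon\mathbb{R}_\kappa[\underline w]\to\mathbb{R}[\underline w]$ be the linear operator determined on monomials by
\[
  T(w^\alpha)=\frac{\alpha_1!\,\alpha_2!}{(\alpha_1+\alpha_2)!}\,w_1^{\alpha_1+\alpha_2}\prod_{i\ge 3}w_i^{\alpha_i}.
\]
A direct check on monomials shows that $T$ is exactly the operator with $T(N(h))=N\big(h(w_1,w_1,w_3,\dots,w_n)\big)$ for all $h$, so that in particular $T(N(f))=N(g)$; moreover $T$ is homogeneous of degree $l=0$. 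Since $N(f)$ is assumed to be a volume polynomial, it suffices to prove that $T$ preserves volume polynomials, and by Theorem~\ref{thm:volsym} this reduces to checking that $\Sym_T$ is a volume polynomial.

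The second step is the computation of $\Sym_T$. Substituting the formula for $T(w^\alpha)$ into the definition of the symbol and splitting the multi-index $\alpha$ into its first two coordinates and the remaining ones, the symbol factors as a ``coupled'' part in $w_1,u_1,u_2$ times the forms coming from the indices $i\geq 3$:
\[
  \Sym_T(w,u)=P(w_1,u_1,u_2)\cdot\prod_{i=3}^{n}(w_i+u_i)^{\kappa_i}.
\]
A comparison of coefficients (using $\binom{\kappa_1}{a}\binom{\kappa_2}{b}\frac{a!b!}{(a+b)!}=\frac{\kappa_1!\,\kappa_2!}{(\kappa_1-a)!(\kappa_2-b)!(a+b)!}$) shows that $P=\kappa_1!\,\kappa_2!\cdot N(q_1 q_2)$, where
\[
  q_1=\sum_{j=0}^{\kappa_1}w_1^{j}u_1^{\kappa_1-j},\qquad q_2=\sum_{k=0}^{\kappa_2}w_1^{k}u_2^{\kappa_2-k}
\]
are the complete homogeneous symmetric polynomials of degrees $\kappa_1$ and $\kappa_2$, which share the variable $w_1$. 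The factors $(w_i+u_i)^{\kappa_i}$ are powers of non‑negative linear forms and hence volume polynomials (e.g.\ by Lemmas~\ref{lem:monomisvol} and~\ref{lem:linmap}), so by Lemma~\ref{lem:prodofvol} the whole problem reduces to showing that $P$ is a volume polynomial.

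For the last step I would apply Corollary~\ref{cor:prodofdenomisvol} to $q_1$ and $q_2$, regarded as polynomials in the three variables $w_1,u_1,u_2$: their normalisations are $N(q_1)=(w_1+u_1)^{\kappa_1}/\kappa_1!$ and $N(q_2)=(w_1+u_2)^{\kappa_2}/\kappa_2!$, which are volume polynomials, so $N(q_1 q_2)$ is a volume polynomial, and hence so is its positive multiple $P$. (Alternatively, $q_1 q_2$ is a co‑volume polynomial by Lemma~\ref{lem:schubertisvol} and Proposition~\ref{prop:prodofdual}, and one checks directly that $(q_1 q_2)^\vee$ equals $N(q_1 q_2)$.) Then $\Sym_T$ is a volume polynomial, Theorem~\ref{thm:volsym} shows that $T$ preserves volume polynomials, and therefore $N(g)=T(N(f))$ is a volume polynomial. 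I expect the only genuinely delicate point to be the bookkeeping in the symbol computation — keeping track that $w_1$ is the variable shared by the two symmetric factors $q_1,q_2$ while $w_2$ has disappeared, and that the normalisation in Corollary~\ref{cor:prodofdenomisvol} is taken over all three variables $w_1,u_1,u_2$; the combinatorial identity underneath is elementary and is really the heart of the matter.
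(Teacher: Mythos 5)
Your proof is correct and follows essentially the same route as the paper: conjugate the diagonalisation by $N$, compute the symbol, split off the factors $\prod_{i\ge 3}(w_i+u_i)^{\kappa_i}$, and identify the coupled part in $w_1,u_1,u_2$ as $\kappa_1!\,\kappa_2!\,N(q_1q_2)$. The only (harmless) difference is that you invoke the statement of Corollary~\ref{cor:prodofdenomisvol} directly, whereas the paper repeats its argument; both your coefficient identity and the claim $N(q_i)=(w_1+u_i)^{\kappa_i}/\kappa_i!$ check out.
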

\begin{proof}
We look at the linear operator
  \[T: \mathbb{R}_\kappa[\underline w] \to \mathbb{R}[\underline w], \ f \mapsto f(w_1,w_1,w_3,\dots,w_n). \]
  We need to prove that the operator $S = N \circ T \circ N^{-1}$ preserves the class of volume polynomials. The symbol of $S$ is
  \begin{align*}
    \Sym_S(w,u) &= \kappa_1!\kappa_2! \prod_{i=3}^n (w_i+u_i)^{\kappa_i} \\
    &\sum_{0 \leq \alpha_1 \leq \kappa_1} \sum_{0 \leq \alpha_2 \leq \kappa_2} \frac{w_1^{\alpha_1+\alpha_2}}{(\alpha_1+\alpha_2)!}\frac{u_1^{\kappa_1-\alpha_1}}{(\kappa_1-\alpha_1)!}\frac{u_2^{\kappa_2-\alpha_2}}{(\kappa_2-\alpha_2)!},
  \end{align*}
  see also the proof of Lemma~4.8 in \cite{branden2021lower}.  With the same argument as in the proof of Corollary \ref{cor:prodofdenomisvol} this polynomial is a volume polynomial. Thus with Theorem \ref{thm:volsym} the operator $S$ preserves volume polynomials.
\end{proof}

\begin{corollary}
\label{cor:deg-0-diff-operator}
  For $a \in \mathbb{R}_{\geq 0}$ the operator $(1+aw_j\partial_i)$ preserves the property of being a volume polynomial.
\end{corollary}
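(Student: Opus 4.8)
The plan is to deduce everything from Theorem~\ref{thm:volsym}, so the whole argument reduces to computing the symbol of the operator $T=1+aw_j\partial_i$ and checking that it is a volume polynomial. First I would fix, for a given volume polynomial $f$, a multidegree $\kappa\in\NN^n$ with $f\in\RR_\kappa[\underline w]$; since one may always enlarge $\kappa$, I will assume $\kappa_i\ge 1$ (if $\kappa_i=0$ then $\partial_i$ annihilates everything in $\RR_\kappa[\underline w]$ and $T$ acts as the identity, so there is nothing to prove). On a monomial $w^\alpha$ with $0\le\alpha\le\kappa$ one has $T(w^\alpha)=w^\alpha+a\alpha_i\,w^{\alpha-e_i+e_j}$, which shows that $T$ is homogeneous of degree $l=0$ and maps $\RR_\kappa[\underline w]$ into $\RR_{\kappa+e_j}[\underline w]$; thus Theorem~\ref{thm:volsym} applies once we know that $\Sym_T\in V^{|\kappa|}_{2n}$.

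Next I would compute $\Sym_T$ by splitting $T$ into its two summands $1$ and $aw_j\partial_i$. The contribution of the summand $1$ is $\sum_{0\le\alpha\le\kappa}\binom{\kappa}{\alpha}w^\alpha u^{\kappa-\alpha}=\prod_k(w_k+u_k)^{\kappa_k}$ by the binomial theorem. For the contribution of $aw_j\partial_i$ I would use the identity $\binom{\kappa_i}{\alpha_i}\alpha_i=\kappa_i\binom{\kappa_i-1}{\alpha_i-1}$, re-index the $i$-th coordinate by $\alpha_i\mapsto\alpha_i-1$, and collapse the resulting sums over all coordinates by the binomial theorem again; this yields $a\kappa_i\,w_j\,(w_i+u_i)^{\kappa_i-1}\prod_{k\ne i}(w_k+u_k)^{\kappa_k}$ (the analogous computation in the case $i=j$ produces $a\kappa_i w_i(w_i+u_i)^{\kappa_i-1}\prod_{k\neq i}(w_k+u_k)^{\kappa_k}$, so both cases are handled uniformly). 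Adding the two pieces gives
\[\Sym_T(w,u)=(w_i+u_i)^{\kappa_i-1}\bigl((w_i+u_i)+a\kappa_i w_j\bigr)\prod_{k\ne i}(w_k+u_k)^{\kappa_k},\]
a product of linear forms in the variables $w,u$ all of whose coefficients are non-negative, the non-negativity of the coefficient $a\kappa_i$ being exactly where the hypothesis $a\ge 0$ enters.

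Finally I would argue that such a product of non-negative linear forms is a volume polynomial: a single-variable degree-one monomial is a volume polynomial by Lemma~\ref{lem:monomisvol}, and substituting into it a row vector with non-negative entries via Lemma~\ref{lem:linmap} turns it into an arbitrary linear form with non-negative coefficients, so each factor above is a volume polynomial; by Lemma~\ref{lem:prodofvol} their product $\Sym_T$ is one as well. Theorem~\ref{thm:volsym} then yields that $T$ preserves volume polynomials. The only genuine obstacle here is the bookkeeping in the symbol computation, i.e.\ getting the re-indexing and the collapsing of the binomial sums right; once the factorisation displayed above is in hand, the conclusion is immediate from the lemmas already established.
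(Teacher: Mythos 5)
Your proposal is correct and follows essentially the same route as the paper: compute $\Sym_T$, factor it as $(w+u)^{\kappa-e_i}\,(w_i+u_i+a\kappa_i w_j)$, and conclude via Theorem~\ref{thm:volsym} together with Lemmata~\ref{lem:linmap}, \ref{lem:prodofvol} and \ref{lem:monomisvol}. The only difference is cosmetic: the paper obtains the factorisation in one line by observing that $\Sym_T(w,u)=T\bigl((w+u)^\kappa\bigr)$ (with $T$ acting on the $w$-variables only), whereas you carry out the equivalent binomial bookkeeping by hand.
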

\begin{proof}
  We look at the linear operator
  \[T: \mathbb{R}_\kappa[\underline w] \to \mathbb{R}[\underline w], \ f \mapsto (1+aw_j\partial_i)f. \]
  and calculate its Symbol
  \begin{align*}
    \Sym_T(w,u) &= T(w+u)^\kappa = (w+u)^\kappa + aw_j (\partial_{w_i} (w+u)^\kappa) \\
    &=  (w+u)^\kappa + a\kappa_i \cdot w_j ( (w+u)^{\kappa - e_i}) =  (w+u)^{\kappa - e_i} (w_i + u_i + a\kappa_i \cdot w_j).
  \end{align*}
  and notice that this is a product of volume polynomials.
\end{proof}

We consider the \emph{polarisation operator}
\begin{align*}
  \Pi^\uparrow_t \colon \RR_{(\kappa,k)}[w_1,\ldots,w_n,t] &\to \RR_{(\kappa,\underline{\mathbf{1}})}[w_1,\ldots,w_n,t_1,\ldots,t_k]\\
  w^\alpha t^b &\mapsto {k \choose b}^{-1} w^\alpha e_b(t_1,\ldots, t_k)
\end{align*}
where $e_b$ is the elementary symmetric polynomial of degree $b$.
\begin{corollary}
  \label{cor:polarisation}
  The polarisation operator sends volume polynomials to volume polynomials.
\end{corollary}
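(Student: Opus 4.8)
The plan is to apply the symbol criterion of Theorem~\ref{thm:volsym} directly, exactly as in the preceding corollaries. First I would regard $\Pi^\uparrow_t$ as a homogeneous linear operator of degree $l=0$ with domain $\RR_{(\kappa,k)}[w_1,\ldots,w_n,t]$, so that the $(n+1)$-tuple $(\kappa,k)$ of degree bounds plays the role of ``$\kappa$'' in the theorem. This set-up is legitimate because $e_b(t_1,\ldots,t_k)$ has degree at most $1$ in each $t_j$, so the image indeed lies in $\RR_{(\kappa,\underline{\mathbf{1}})}[w_1,\ldots,w_n,t_1,\ldots,t_k]$, and because $w^\alpha t^b \mapsto w^\alpha e_b(t_1,\ldots,t_k)$ preserves total degree.

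Next I would compute the symbol $\Sym_{\Pi^\uparrow_t}$, introducing dual variables $u_1,\ldots,u_n$ for $w_1,\ldots,w_n$ and a single dual variable $s$ for $t$. Using the factorisation $\binom{(\kappa,k)}{(\alpha,b)}=\binom{\kappa}{\alpha}\binom{k}{b}$, the binomial coefficient $\binom{k}{b}$ occurring in the symbol cancels against the normalising factor $\binom{k}{b}^{-1}$ in the definition of $\Pi^\uparrow_t$. What is left factors as $\bigl(\sum_{0\le\alpha\le\kappa}\binom{\kappa}{\alpha}w^\alpha u^{\kappa-\alpha}\bigr)\bigl(\sum_{b=0}^{k} e_b(t_1,\ldots,t_k)\,s^{k-b}\bigr)$. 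The first factor equals $\prod_{i=1}^n (w_i+u_i)^{\kappa_i}$ by the binomial theorem, and the second equals $\prod_{j=1}^k (t_j+s)$ by the standard generating-function identity for elementary symmetric polynomials.

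Finally I would note that this symbol is a product of linear forms with non-negative coefficients. Each such linear form is a volume polynomial, being the image of the degree-one monomial under a non-negative linear substitution (Lemma~\ref{lem:monomisvol} together with Lemma~\ref{lem:linmap}); hence by Lemma~\ref{lem:prodofvol} the whole symbol is a volume polynomial. Theorem~\ref{thm:volsym} then gives the conclusion that $\Pi^\uparrow_t$ sends volume polynomials to volume polynomials.

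I do not expect a genuine obstacle here. The only point demanding care is the bookkeeping of degree bounds: $t$ is treated as a single variable of bound $k$ in the domain but is expanded into $k$ variables of bound $1$ in the codomain, and one must check that the normalisation constant $\binom{k}{b}^{-1}$ in the definition of $\Pi^\uparrow_t$ is precisely what makes the $\binom{k}{b}$'s in the symbol cancel, so that the symbol collapses to a product of linear forms rather than to a more complicated polynomial in $t_1,\ldots,t_k,s$.
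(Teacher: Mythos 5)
Your proposal is correct and follows essentially the same route as the paper: compute the symbol, observe that the $\binom{k}{b}^{-1}$ normalisation cancels the binomial coefficient so the symbol collapses to $(w+u)^\kappa\prod_{j=1}^k(t_j+s)$, and conclude via Theorem~\ref{thm:volsym} since a product of non-negative linear forms is a volume polynomial. Your extra bookkeeping about the degree bounds and the cancellation is exactly the (implicit) content of the paper's one-line calculation.
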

\begin{proof}
  We calculate the symbol
  \[
    \sum_{\alpha, b} \binom{\kappa}{\alpha} w^\alpha e_b u^{\kappa-\alpha} s^{k-b} =(w+u)^\kappa \sum_{0 \leq b \leq k} e_b \cdot s^{k-b} =(w+u)^\kappa \prod_{1 \leq i \leq k} (t_i+s).\]
  This is a product of linear forms and hence a volume polynomial.
\end{proof}

The following operator is occurs in the context of the \emph{symmetric exclusion process}. It preserves stable and Lorentzian polynomials, see \cite[Cor.~3.9]{BrandenHuh}.
\begin{corollary}
  For $0 \leq \theta \leq 1$ the operator 
  $\Phi_{\theta}^{1,2} \colon \RR_{\underline{\mathbf 1}}[\underline w] \to \RR_{\underline{\mathbf 1}}[\underline w]$
  sending $f(w_1,\ldots,w_n)$ to \(\theta f(w_1,w_2,w_3,\ldots,w_n) + (1-\theta)f(w_2,w_1,w_3\ldots,w_n)\)
  preserves volume polynomials.
\end{corollary}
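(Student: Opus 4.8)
The plan is to apply Theorem~\ref{thm:volsym}, so I first compute the symbol of $\Phi_\theta^{1,2}$. Here $\kappa=\underline{\mathbf 1}$, so $k=n$, and $\Phi_\theta^{1,2}$ is homogeneous of degree $l=0$ (it is a convex combination of the identity and a coordinate transposition). Since $\binom{\underline{\mathbf 1}}{\alpha}=1$ for all $0\le\alpha\le\underline{\mathbf 1}$, one has $\Sym_{\Phi_\theta^{1,2}}(w,u)=\sum_{\alpha}\Phi_\theta^{1,2}(w^\alpha)\,u^{\underline{\mathbf 1}-\alpha}=\Phi_\theta^{1,2}\bigl(\prod_{i=1}^{n}(w_i+u_i)\bigr)$, the operator acting on the $w$-variables with the $u_i$ as constants. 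As the transposition of $w_1$ and $w_2$ fixes each factor $w_i+u_i$ with $i\ge 3$, this gives
\[
  \Sym_{\Phi_\theta^{1,2}}(w,u)=\Bigl(\theta\,(w_1+u_1)(w_2+u_2)+(1-\theta)\,(w_2+u_1)(w_1+u_2)\Bigr)\prod_{i=3}^{n}(w_i+u_i).
\]
The last product is a product of linear forms, hence a volume polynomial, so by Lemma~\ref{lem:prodofvol} it suffices to prove that
\[
  B:=\theta\,(w_1+u_1)(w_2+u_2)+(1-\theta)\,(w_1+u_2)(w_2+u_1)=w_1w_2+u_1u_2+(1-\theta)(w_1u_1+w_2u_2)+\theta(w_1u_2+w_2u_1)
\]
is a volume polynomial in the four variables $w_1,w_2,u_1,u_2$.

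For $\theta\in\{0,1\}$ the polynomial $B$ is itself a product of two linear forms and there is nothing to prove, so assume $0<\theta<1$ and set $\rho:=\sqrt{\theta(1-\theta)}\in(0,\tfrac12]$. Let $E$ be an elliptic curve and on the abelian surface $E\times E$ take the nef classes $f_1=[E\times\{\mathrm{pt}\}]$, $f_2=[\{\mathrm{pt}\}\times E]$, $\delta=[\Delta]$, with $f_1^2=f_2^2=\delta^2=0$ and $f_1f_2=f_1\delta=f_2\delta=1$. My proposal is to realise $B$ as an intersection polynomial via
\[
  D_{w_1}=f_1,\quad D_{w_2}=f_2,\quad D_{u_1}=(\theta-\rho)f_1+(1-\theta-\rho)f_2+\rho\,\delta,\quad D_{u_2}=(1-\theta+\rho)f_1+(\theta+\rho)f_2-\rho\,\delta.
\]
A direct computation, using only the intersection numbers above, gives $(w_1D_{w_1}+w_2D_{w_2}+u_1D_{u_1}+u_2D_{u_2})^2=2B$. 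Thus $B$ is (half of) an intersection polynomial of nef $\RR$-divisors on a surface, hence, after approximating the $D_{u_i}$ by nef $\QQ$-divisors, a volume polynomial; by Theorem~\ref{thm:volsym} this proves the corollary.

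The step that actually needs an argument is that $D_{u_1}$ and $D_{u_2}$ are nef: they are $\RR$-divisors with an a priori negative coefficient when $\theta\neq\tfrac12$. This uses the standard fact that on an abelian surface a class $D$ with $D^2\ge 0$ and $D\cdot H>0$ for an ample $H$ is nef (the nef cone is the closure of one component of the positive cone, because every irreducible curve has non-negative self-intersection). Concretely one checks $D_{u_1}^2=D_{u_2}^2=0$ and, with the ample class $f_1+f_2+\delta$, that $D_{u_1}\cdot(f_1+f_2+\delta)=2-2\rho>0$ and $D_{u_2}\cdot(f_1+f_2+\delta)=2+2\rho>0$.

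Equivalently, and perhaps more in the spirit of the proof of Corollary~\ref{cor:normisvol}, one could avoid the geometry altogether: $B$ is a degree-$2$ polynomial with non-negative coefficients whose Hessian is annihilated by $(1,1,-1,-1)$ and has $(1,1,1,1),(1,-1,1,-1),(1,-1,-1,1)$ as further eigenvectors, with eigenvalues $2,-2\theta,2\theta-2$; so exactly one eigenvalue is positive, $B$ is a Lorentzian polynomial of degree $2$, and one invokes that quadratic Lorentzian polynomials are volume polynomials. Either way the only non-mechanical point is this last check; everything else is the short symbol computation and routine bookkeeping.
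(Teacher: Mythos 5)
Your proof is correct, and while the reduction is the same as the paper's --- compute the symbol, observe that it factors as $\bigl(\theta(w_1+u_1)(w_2+u_2)+(1-\theta)(w_2+u_1)(w_1+u_2)\bigr)\prod_{i\ge 3}(w_i+u_i)$, and handle the quadratic factor separately --- your treatment of that quadratic factor (the content of the paper's Lemma~\ref{lem:symbol-sym-exclusion}) is genuinely different. The paper first reduces to rational $\theta=p/q$ by approximation and then realises the quadratic as an intersection form of four nef divisors on a blow-up of $\PP^2$ at many general points, where nefness rests on the known square case of Nagata's conjecture. You instead realise it on the abelian surface $E\times E$ using $f_1,f_2,\delta$; I checked your intersection numbers ($D_{u_1}^2=D_{u_2}^2=0$, $D_{u_1}\cdot D_{u_2}=1$, the mixed terms giving $2\theta$ and $2(1-\theta)$) and they are right, and nefness of the $\RR$-classes $D_{u_i}$ follows, as you say, from the absence of negative curves on an abelian surface plus $D_{u_i}^2\ge 0$ and positive pairing with an ample class. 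Your construction is shorter, works for real $\theta$ directly (the passage to $\QQ$-classes is easy since $D_{u_i}+\epsilon(f_1+f_2+\delta)$ is ample), and replaces the Nagata input by elementary surface theory; the paper's construction has the virtue of staying inside rational surfaces. One caveat: your closing ``alternative'' argument, which checks the Hessian signature and then invokes ``quadratic Lorentzian polynomials are volume polynomials,'' should not be leaned on --- that general statement is not available (the paper only uses the bivariate case \cite[4.9]{BrandenHuh}, and avoiding exactly this gap is the reason Lemma~\ref{lem:symbol-sym-exclusion} has a geometric proof at all). Your $E\times E$ construction in effect proves the rank-$\le 3$ instance of that claim, so the main line of your argument stands on its own.
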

\begin{proof}
  The symbol of this operator is
  \[
    \big(\theta(w_1+u_1)(w_2+u_2) + (1-\theta)(w_2+u_1)(w_1+u_2)\big)\prod_{i=3}^n(w_i+u_i).
  \]
  The first factor is a volume polynomial by Lemma~\ref{lem:symbol-sym-exclusion} below. Hence, the symbol is a product of volume polynomials.
\end{proof}

\begin{lemma}
  \label{lem:symbol-sym-exclusion}
  For every $\theta \in \RR$ with $0 \leq \theta \leq 1$ the polynomial 
  \[f = xz + \theta(xw+yz) + (1-\theta)(zw+yx) +yw \in \mathbb{R}[x,y,z,w]\]
  is a volume polynomial.
\end{lemma}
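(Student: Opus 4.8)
The plan is to realize $f$, up to a positive scalar, as a volume polynomial $\bigl(\sum x_i D_i\bigr)^2$ of four nef divisor classes $D_x,D_y,D_z,D_w$ on an abelian surface, exploiting that on such a surface the nef cone is as large as the Hodge index theorem allows. First I would record the coefficient pattern: writing $f=\sum_{i<j}c_{ij}x_ix_j$ we have $c_{xy}=c_{zw}=1-\theta$, $c_{xz}=c_{yw}=1$ and $c_{xw}=c_{yz}=\theta$, all non-negative precisely because $0\le\theta\le1$. I would then set $a=\sqrt{\theta}$, $b=\sqrt{1-\theta}$, so $a^2+b^2=1$.

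Next I would set up the geometric model. Let $E$ be an elliptic curve with $\mathrm{End}(E)=\ZZ$; then $N\coloneqq\mathrm{NS}(E\times E)$ has rank $3$, and by the Hodge index theorem the intersection form on $N_\RR$ has signature $(1,2)$. Since an irreducible curve on an abelian surface moves in its translates and hence has non-negative self-intersection, $E\times E$ carries no negative curves, so (by Nakai--Moishezon and the reverse Cauchy--Schwarz inequality in Lorentzian signature) its nef cone equals the closure of the forward component of the positive cone $\{D\in N_\RR:D^2>0\}$. Fix an $\RR$-basis $e_0,e_1,e_2$ of $N_\RR$ diagonalizing the form with $e_0^2=1$, $e_1^2=e_2^2=-1$, with $e_0$ in the forward cone, and set
\[
D_x=e_0+ae_1+be_2,\quad D_y=e_0+ae_1-be_2,\quad D_z=e_0-ae_1-be_2,\quad D_w=e_0-ae_1+be_2.
\]

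Then I would verify the two points that make this work. First, each $D_i$ is nef: $D_i^2=1-a^2-b^2=0$ and $D_i\cdot e_0=1>0$, so $D_i$ lies on the boundary of the forward cone, hence is a limit of ample classes. Second, a short orthogonality computation gives $D_i\cdot D_j=2c_{ij}$ for $i\ne j$ — for instance $D_x\cdot D_y=1-a^2+b^2=2(1-\theta)$, $D_x\cdot D_z=1+a^2+b^2=2$, $D_x\cdot D_w=1+a^2-b^2=2\theta$ — so that
\[
\Bigl(\sum x_iD_i\Bigr)^2=\sum_{i,j}(D_i\cdot D_j)\,x_ix_j=4f .
\]
Thus $4f$, and therefore $f$, is a volume polynomial. (If one wishes to avoid $\RR$-divisor classes, approximate each $D_i$ by rational nef classes in the forward cone and pass to the limit, which the definition permits; alternatively the statement also follows from the facts that $f$ is Lorentzian — its Hessian has eigenvalues $2,0,-2\theta,-2(1-\theta)$ — and that the cones of quadratic Lorentzian and quadratic volume polynomials coincide.)

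I expect the only genuine subtlety to be the geometric input: that $\mathrm{NS}(E\times E)$ has rank exactly $3$ for $E$ without complex multiplication and that the resulting surface has no negative curves, so that its nef cone fills out the entire closed forward light cone of $\RR^{1,2}$. Everything after that is bookkeeping with an orthogonal basis, and the decomposition of $f$ can be traced back to the identity $4f=(x+y+z+w)^2-\theta(x+y-z-w)^2-(1-\theta)(x-y-z+w)^2$, which is exactly the coordinate form of the construction above.
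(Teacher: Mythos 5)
Your proof is correct, but it takes a genuinely different route from the paper. The paper fixes $\theta=p/q\in\QQ$ (reducing to this case by approximation), blows up $\PP^2$ at a carefully counted collection of general points, and invokes the known square case of Nagata's conjecture to certify that four combinatorially designed classes $\bar qH-\sum E_i$ are nef; the identity $(xD_x+yD_y+zD_z+wD_w)^2=2f$ then follows from bookkeeping with the index sets. You instead work on an abelian surface $E\times E$ with $\operatorname{End}(E)=\ZZ$, where $\mathrm{NS}_\RR$ has signature $(1,2)$ and the nef cone is the entire closed forward light cone, and you realize the four classes as isotropic vectors $e_0\pm\sqrt{\theta}\,e_1\pm\sqrt{1-\theta}\,e_2$; I checked the pairwise intersection numbers and they do give $D_i\cdot D_j=2c_{ij}$, so $\bigl(\sum x_iD_i\bigr)^2=4f$. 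Your approach handles all real $\theta$ in one stroke and replaces Nagata's theorem by the (equally standard, but softer) facts that $\mathrm{NS}(E\times E)$ has rank $3$ without CM and that abelian surfaces carry no negative curves, so that the nef cone is self-dual; the price is that the $D_i$ are irrational classes, which you correctly discharge by approximating with rational ample classes on the \emph{same} surface and using that volume polynomials are closed under limits of positive multiples. Two small caveats: the nefness criterion you want is Kleiman duality with the Mori cone plus self-duality of the positive cone (Nakai--Moishezon as stated concerns ampleness), and your final parenthetical --- that the cones of quadratic Lorentzian and quadratic volume polynomials coincide --- is not something you should lean on without a precise reference, as the inverse Alexandrov--Fenchel problem is delicate; fortunately your main construction does not need it.
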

\begin{proof}
  By approximation it is sufficient to show the claim  $\theta=\sfrac{p}{q} \in \QQ$.
  
  We choose $\bar{q} \in \mathbb{N}$ such that $\bar{q}^2 \geq q$.  We set $a \coloneqq \bar{q}^2 - q$, $b \coloneqq \bar{q}^2 - p$ and $c \coloneqq \bar{q}^2 - (q-p)$.  For symmetry reasons we may assume $\theta = \frac{p}{q} \leq \frac{1}{2}$. This is equivalent to $c = \bar{q}^2 - q + p \leq \bar{q}^2 - p = b$. So we have $b \geq c \geq a$.
  We choose disjoint index sets $I_b, I_{c-a}, J_{c-a}, I_{b-a}$ of cardinalities $b,c-a,c-a,b-a$ and a subset $I_a \subset I_b$ of cardinality $a$. 
  We set $r \coloneqq |I|$ where $I = I_b \sqcup I_{c-a} \sqcup J_{c-a} \sqcup I_{b-a}$. We now choose $r$ points in $\mathbb{P}^2$ in general position and denote the blow-up of $\mathbb{P}^2$ at those points by $X$. 
  Let $H \subset X$ be the pullback of a hyperplane  in $\mathbb{P}^2$ not meeting any of those points.
  We identify each $i \in I$ with an exceptional divisor $E_i$ in $X$. Define divisors
  \begin{align*}
    D_x &= \bar{q} H - \sum_{i \in I_b \sqcup  J_{c-a}} E_i \\
    D_w &= \bar{q} H - \sum_{i \in I_b \sqcup I_{c-a}} E_i \\
    D_y &= \bar{q} H - \sum_{i \in I_a \sqcup J_{c-a} \sqcup I_{b-a}} E_i \\
    D_z &= \bar{q} H - \sum_{i \in I_a \sqcup I_{c-a} \sqcup I_{b-a}} E_i.
  \end{align*}
  Note, that every sum above has exactly $b+c-a = \bar{q}^2 - p + \bar{q}^2 - q + p - \bar{q}^2 + q = \bar{q}^2$ summands. Hence,  $D_x,D_w,D_y$ and $D_z$ are nef by the known case of Nagata's conjecture, see \cite{zbMATH03305239}.
  We now have that
  \begin{align*}
    &(xD_x + yD_y + zD_z + wD_w)^2 = 2(\bar{q}^2 - a - c + a)xy \\
    &+ 2(\bar{q}^2 - a)xz + 2(\bar{q}^2 - b)xw  \\
    &+ 2(\bar{q}^2 - a - b + a)yz + 2(\bar{q}^2 - a)yw \\
    &+ 2(\bar{q}^2 - a - c + a)zw \\
    &= 2qxz + 2p(xw+yz) + 2(q-p)(zw+yx) + 2qyw
  \end{align*}
  is equal to $f$ after rescaling. 
\end{proof}

\section{An alternative characterisation of  co-volume polynomials}  
\label{sec:co-volume}

Given a product of projective spaces $\PP^\kappa \coloneqq \mathbb{P}^{\kappa_1} \times \dots \times \mathbb{P}^{\kappa_n}$ over an algebraically closed field $k$.
Let $H_i$ denote the pull-back of the hyperplane class from the $i$-th factor. Every class in the codimension-$d$ graded piece $A^d(\mathbb{P}^\kappa)$ of the Chow ring of $\mathbb{P}^\kappa$ may be written uniquely as 
\[z = \sum_{|\alpha|= d} c_{\alpha} H_{1}^{\alpha_1} \dots H_n^{\alpha_n} \cap  [\mathbb{P}^\kappa],\]
where $\alpha \leq \kappa$ and $c_\alpha \in \mathbb{Z}$. We associate with $z$ the polynomial
\[P_{z}(x_1,\dots,x_n) \coloneqq \sum_{|\alpha|=d} c_\alpha x^\alpha.\]

In \cite{Aluffi} Aluffi defined co-volume polynomials as limits of polynomials of the form $cP_{[W]}$ for a positive real number $c$ and a closed subvariety $W$ of a product of projective spaces. They also appeared as \emph{multidegree polynomials} in \cite{zbMATH07638477}. Our aim here is to show that our definition is consistent with Aluffi's.

\begin{theorem}
  A polynomial is a co-volume polynomial in our sense, i.e. fulfils the condition of Definition~\ref{def:dual-vol} if and only if it is a co-volume polynomial in Aluffi's sense, i.e. is a limit of polynomials of the form $cP_{[W]}$.
\end{theorem}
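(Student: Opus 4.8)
The plan is to establish the two implications separately, with the key technical content being the translation between the intersection-theoretic data of a subvariety $W\subset\PP^\kappa$ and the dual of a volume polynomial. I would start from the observation that for a product of projective spaces, the structure of the Chow ring is completely explicit: $A^*(\PP^\kappa)=\ZZ[H_1,\dots,H_n]/(H_1^{\kappa_1+1},\dots,H_n^{\kappa_n+1})$, and the degree of a codimension-$d$ class is read off from its coefficient on the top monomial $H_1^{\kappa_1}\cdots H_n^{\kappa_n}$. The crucial point is that the coefficients $c_\alpha$ of $P_{[W]}$ are exactly the numbers $(\sum y_i H_i)^{\dim W}\cap[W]$ up to the multinomial normalisation; more precisely, for a $d$-dimensional subvariety $W$ of $\PP^\kappa$ (so $d=|\kappa|-\operatorname{codim}W$, writing $k=|\kappa|$), pushing forward to $\PP^\kappa$ and pairing with $H^{\kappa-\beta}$ shows that $P_{[W]}$ and the volume polynomial $\vol_{W,H_1|_W,\dots,H_n|_W}$ are related by the $\vee$-operation together with the normalisation operator $N$ — this is the content one needs to make precise.

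For the direction ``Aluffi $\Rightarrow$ ours'': given $W\subset\PP^\kappa$ of dimension $e$, I would consider the restrictions $H_i|_W$, which are nef on $W$, and form the volume polynomial $g=\vol_{W,H_1|_W,\dots,H_n|_W}=(\sum x_i H_i|_W)^e$. Its coefficient on $x^\alpha$ with $|\alpha|=e$ is $\binom{e}{\alpha}(H_1^{\alpha_1}\cdots H_n^{\alpha_n}\cdot[W])$. On the other hand $[W]\in A^{k-e}(\PP^\kappa)$ has $P_{[W]}=\sum_{|\gamma|=k-e}c_\gamma x^\gamma$, and by the projection formula $c_\gamma$ equals the intersection number $(H_1^{\kappa_1-\gamma_1}\cdots H_n^{\kappa_n-\gamma_n}\cdot[W])$ whenever $\kappa-\gamma\geq 0$ componentwise, i.e. $\gamma\leq\kappa$, and this is exactly $H_1^{\alpha_1}\cdots H_n^{\alpha_n}\cdot[W]$ with $\alpha=\kappa-\gamma$ — but note $|\alpha|=k-(k-e)=e$, so indeed $\alpha$ ranges over exactly the exponent vectors appearing in $g$. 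Comparing, one gets $g=N^{-1}\big(x^\kappa P_{[W]}(x_1^{-1},\dots,x_n^{-1})\big)$ up to a global constant, i.e. $g$ is a positive multiple of $(P_{[W]})^\vee$; since $g$ is by definition a volume polynomial, $P_{[W]}$ is a co-volume polynomial in our sense, and this passes to limits and positive multiples since $V^d_n$ is closed under both.

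For the converse ``ours $\Rightarrow$ Aluffi'': suppose $s$ is a co-volume polynomial, so $s^\vee\in V^d_n$. I would like to approximate $s^\vee$ by honest volume polynomials $\vol_{X,D_1,\dots,D_n}$ arising from projective varieties; the subtlety is that in general $D_i$ are only nef, not ample, and the target $X$ need not embed in a product of projective spaces in the right way. The standard move is: a nef divisor is a limit of ample ones, and a very ample $D_i$ (after clearing denominators, replacing $D_i$ by a multiple, which only rescales and is harmless for the limit statement) gives a morphism $X\to\PP^{N_i}$; combining these over $i$ and then using the Segre-type embedding into a single product $\PP^\kappa$ after further Veronese-type twists, one realises the relevant intersection numbers as multidegrees of the image variety $W$ (or its degree times the generic fibre, if the combined map is not birational onto its image). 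Then reading the multidegree polynomial $P_{[W]}$ and inverting the computation of the first part shows $s^\vee$ is a limit of volume polynomials coming from such $W$, equivalently $s$ is a limit of polynomials $cP_{[W]}$. The main obstacle I expect is precisely this realisation step: controlling the passage from nef to very ample divisors on a possibly singular limit variety, handling the degree of the map onto its image, and ensuring the bookkeeping of Veronese re-embeddings (which act on exponent vectors by scaling and on $\kappa$ correspondingly) is compatible with the $\vee$-operation — i.e. that $\vee$ is insensitive to the choice of $\kappa$, which is already noted after Definition~\ref{def:dual-vol} and will be used repeatedly. A cleaner alternative for this direction, if available, is to cite or adapt the known description of volume polynomials via toric/nef data and match it directly against Aluffi's definition, but the honest geometric approach above is the fallback.
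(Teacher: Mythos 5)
Your first direction (``Aluffi $\Rightarrow$ ours'') is correct and is essentially the computation behind the reference the paper cites for that implication: pairing $[W]$ with the monomials $H^{\kappa-\gamma}$ identifies $(P_{[W]})^\vee$ with $\frac{1}{e!}\vol_{W,H_1|_W,\dots,H_n|_W}$, where $e=\dim W$, and one passes to limits. No complaints there.

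The converse direction, however, has a genuine gap, and it is not among the obstacles you list (nef versus very ample, degree of the map, Veronese bookkeeping --- all of which are indeed manageable). The problem is a forced degree mismatch. If $s$ has degree $d$, then any sequence $c_iP_{[W_i]}$ converging to $s$ must eventually consist of homogeneous polynomials of degree $d$, i.e.\ the $W_i$ must have codimension exactly $d$ in their ambient products $\PP^{\kappa^i}$. But the subvarieties your construction produces are the images $X_i\hookrightarrow\PP^{\kappa^i}$ of varieties of dimension $\deg s^\vee=|\underline{\mathbf d}|-d$ under very ample embeddings, so their codimension is $|\kappa^i|-|\underline{\mathbf d}|+d$, which is in general much larger than $d$ and varies with $i$. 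What one actually obtains is $P_{[X_i]}=x^{\kappa^i-\underline{\mathbf d}}\cdot f_i'$ with $f_i'\to s$ (after rescaling variables), and the whole difficulty is to show that the cofactor $f_i'$ is \emph{itself} of the form $cP_{[W']}$ for some irreducible subvariety $W'$ of a smaller product of projective spaces. This is a nontrivial geometric statement --- the paper isolates it as Lemma~\ref{lem:covol-product-w-monomial} and proves it by projecting each $\PP^{\kappa_j}$ away from a point not on $X_i$ (via a blow-up and a proper pushforward computation showing $\bar\pi_*\bar\psi^*$ divides the multidegree polynomial by $H_1$). Your ``inverting the computation of the first part'' and the remark that $\vee$ is insensitive to $\kappa$ do not substitute for this: insensitivity to $\kappa$ is a statement about volume polynomials on the $\vee$-side, whereas here you must produce an actual subvariety realising the lower-degree polynomial. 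Without this cancellation step the argument does not close.
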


\begin{proof}
  One direction has been shown in \cite[Prop.~2.8]{Aluffi}.

  For the other direction we essentially invert the argument given there, but additionally we need to show that one is allowed  to cancel monomial factors from polynomials of the form $cP_{[W]}$ and still obtains a polynomial of that type. This will be done in Lemma~\ref{lem:covol-product-w-monomial}.

  Given polynomial $f$ of degree $d$, such that $g = x^{\underline{\mathbf d}} f(\sfrac{1}{x_1},\ldots,\sfrac{1}{x_n})$ is a volume polynomial. Then there is a sequence of polynomials
  \[g_i = \vol_{X_i, \lambda_1^i D_1^i,\dots, \lambda_n^i D_n^i}\]
  such that $D_j^i$ is a very ample divisor in the projective variety $X_i$. We get a closed immersion
  \[X_i \xhookrightarrow[]{\varphi} \mathbb{P} = \mathbb{P}^{\kappa_1^i} \times \dots \times \mathbb{P}^{\kappa_n^i}.\]
  such that $D_j^i = \varphi^*(H_j^i)$ where $H_j^i$ is a hyperplane in $\mathbb{P}^{\kappa_j^i}$. Let $h_i$ be the co-volume polynomial defined by $X_i$ in $\mathbb{P}$ and $f_i(x_1,\ldots, x_n)\coloneqq h_i(\lambda_1^{-1}x_1,\ldots, \lambda_n^{-1}x_n)$. Then
  \[\partial_{f_i} ( \lambda^{\kappa^i} x^{\kappa^i}) = g_i = N(x^{\kappa^i}f_i(\sfrac{1}{x_1},\ldots,\sfrac{1}{x_n})).\] 
  Now
  \[f_i = x^{\kappa^i - \underline{\mathbf d}} \cdot f_i'\]
  and with Lemma \ref{lem:covol-product-w-monomial} the factor $f_i'$ is still a co-volume polynomial. We get
  \[ g_i=N(x^{\underline{\mathbf d}}f_i'(\sfrac{1}{x_1},\ldots,\sfrac{1}{x_n})) \to g\]
  or equivalently $f_i' \to x^{\underline{\mathbf d}}N^{-1}(g)(\sfrac{1}{x_1},\ldots,\sfrac{1}{x_n}) = f$.
\end{proof}

\begin{lemma}
  \label{lem:covol-product-w-monomial}
  Fix $\PP=\PP^\kappa$. Then for every $f \in \RR_{\kappa-e_1}[\underline x]$ the class
  $f(H_1,\ldots,H_n)$ is the class of an irreducible subvariety if and only if the same is true for the class $H_1 \cdot f(H_1,\ldots,H_n)$.
\end{lemma}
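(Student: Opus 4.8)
The plan is to prove the two implications by opposite constructions: ``multiplying by $H_1$'' via a general hyperplane section, and ``dividing out $H_1$'' via a cone. Write $\pi\colon\PP^\kappa\to\PP^{\kappa_1}$ for the first projection, and recall that the class of a subvariety $V$ lies in the ideal generated by $H_1^{\,j}$ exactly when $\dim\pi(V)\le\kappa_1-j$; thus $f\in\RR_{\kappa-e_1}[\underline x]$ says precisely that $f(H_1,\dots,H_n)$ contains no monomial divisible by $H_1^{\kappa_1}$, whence $\dim\pi(V)\ge 1$ whenever $[V]=f(H_1,\dots,H_n)\ne 0$. (I read ``is the class of an irreducible subvariety'' up to a positive integer scalar, which is the notion relevant for co-volume polynomials and is automatic when $\kappa_1=1$.)

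For the implication from $f(H_1,\dots,H_n)$ to $H_1\cdot f(H_1,\dots,H_n)$, let $V$ be irreducible with $[V]=f(H_1,\dots,H_n)$ and put $V'=V\cap\pi^{-1}(\Lambda)$ for a general hyperplane $\Lambda\subset\PP^{\kappa_1}$; since $\dim\pi(V)\ge 1$ this is a proper, non-empty intersection with $[V']=[V]\cdot H_1$ up to a positive multiplicity. When $\dim\pi(V)\ge 2$, Bertini's irreducibility theorem (applied to the base-point-free system of the $\pi^{-1}(\Lambda)$ on $V$, whose associated morphism is $\pi|_V$) makes $V'$ irreducible, and we are done. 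When $\dim\pi(V)=1$ the hypothesis forces $[V]=H_1^{\kappa_1-1}g(H_2,\dots,H_n)$, so $H_1\cdot f(H_1,\dots,H_n)=H_1^{\kappa_1}g(H_2,\dots,H_n)$; here $g(H_2,\dots,H_n)$ is a positive multiple of the class of a general fibre of $\pi|_V$, whose components all have one common class $[Z]$ (being monodromy-conjugate, as $V$ is irreducible), so $H_1\cdot f(H_1,\dots,H_n)$ is a positive multiple of the irreducible class $[\{p\}\times Z]$.

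For the converse, let $W$ be irreducible with $[W]=H_1\cdot f(H_1,\dots,H_n)$. Divisibility by $H_1$ gives $\pi(W)\subsetneq\PP^{\kappa_1}$, so fix a hyperplane $\Lambda\supseteq\pi(W)$ and a point $p_0\in\PP^{\kappa_1}\setminus\Lambda$, and let $V$ be the union of the lines $\overline{y\,p_0}\times\{z\}$ over $(y,z)\in W$ — the image of the morphism $\phi\colon W\times\PP^1\to\PP^\kappa$ joining $W$ fibrewise over $\PP^{\kappa_2}\times\dots\times\PP^{\kappa_n}$ to $\{p_0\}\times\PP^{\kappa_2}\times\dots\times\PP^{\kappa_n}$ (well defined as $y\in\Lambda\not\ni p_0$), which is irreducible and maps birationally onto $V$. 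Since $(\pi\circ\phi)^{*}H_1$ is represented by $W\times\{\mathrm{pt}\}$, the projection formula gives $[V]\cdot H_1=[W]=H_1\cdot f(H_1,\dots,H_n)$, so $[V]$ and $f(H_1,\dots,H_n)$ agree up to a class divisible by $H_1^{\kappa_1}$.

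The crux is to remove this last ambiguity, i.e. to show $[V]$ has no $H_1^{\kappa_1}$-term, so that $[V]=f(H_1,\dots,H_n)$. This is exactly where $f\in\RR_{\kappa-e_1}[\underline x]$ is indispensable: such a coefficient of $[V]$ is an intersection number of $V$ with a general cycle $\PP^{\kappa_1}\times M$ for $M\subset\PP^{\kappa_2}\times\dots\times\PP^{\kappa_n}$ a general linear subspace of complementary dimension, and because $V$ and $W$ have the same image there while $\operatorname{codim}W=\deg f+1$, a dimension count shows a general such $M$ misses that image, so the coefficient vanishes. I expect this step, together with the $\dim\pi(V)=1$ subcase above, to be the real obstacles — both being points where the precise degree hypothesis on $f$, rather than mere Lorentzian- or volume-type positivity, is what makes the argument run.
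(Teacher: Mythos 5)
Your argument is essentially correct under your declared reading of the statement up to a positive scalar (which is indeed the version relevant for co-volume polynomials), but in the hard direction it goes a genuinely different way than the paper. You build a variety $V$ of one dimension higher by coning $W$ fibrewise from a vertex $\{p_0\}\times\PP^{\kappa_2}\times\cdots\times\PP^{\kappa_n}$, and must then kill the resulting $H_1^{\kappa_1}$-ambiguity in $[V]$ by a separate dimension count. The paper instead divides by $H_1$ directly: it blows up $\PP^{\kappa_1}$ at a point $p_0$ off the image of $X$, views the blow-up as a $\PP^1$-bundle over $\PP^{\kappa_1-1}$, and pushes $[X]$ forward along the bundle projection; on monomials this sends $H^\alpha$ to $H^{\alpha-e_1}$ (and kills $\alpha_1=0$), so the projected variety $Y\subset\PP^{\kappa-e_1}$ has class $f$ up to the degree of the projection, with no residual ambiguity left to remove. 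Your cone is exactly $\bar\psi(\bar\pi^{-1}(Y))$, so the two constructions are adjoint to one another; the paper's buys a cleaner class computation, yours stays inside the original $\PP^\kappa$ at the cost of the extra vanishing argument, which you carry out correctly. Your treatment of the forward direction is in fact more careful than the paper's one-line Bertini remark: the case $\dim\pi(V)=1$, where the hyperplane section is reducible and one needs the monodromy argument to see that its components share a class, is a real case that the paper silently absorbs.

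Two repairable points. First, ``fix a hyperplane $\Lambda\supseteq\pi(W)$'' is impossible in general, since a proper closed subvariety of $\PP^{\kappa_1}$ need not be degenerate; but $\Lambda$ is only used to guarantee $y\neq p_0$, for which $p_0\notin\pi(W)$ (available because $\pi(W)\subsetneq\PP^{\kappa_1}$) already suffices. Second, $[V]\cdot H_1=[W]$ holds only up to the degree of $\phi$ onto its image, since the fibrewise projection of $W$ from $p_0$ need not be birational (take $W=C\times\{z\}$ with $C$ a plane conic); what you actually obtain is $f=m\,[V]$ for a positive integer $m$. This is consistent with your up-to-scalar convention, and the paper's own assertion that its projection is birational suffers from the same defect, so you are no worse off than the source.
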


\begin{proof}
  The ``only if'' part is clear by intersecting with the pullback of a generic hyperplane in $\PP^{\kappa_1}$.
  Let $X$ be a irreducible closed subvariety in $\mathbb{P}^\kappa$ with $P_{[X]} = x_1 \cdot f$. We want to show that $f$ is represented by an irreducible subvariety. We choose a point $p \in \PP^{\kappa_1}$ such that $p$ is not in $X$. Let $\psi \colon B \to \PP^{\kappa_1}$  be the blow-up in $p$ this is a $\PP^1$-bundle over $\PP^{\kappa_1-1}$ with structure morphism  $\pi \colon B \to \PP^{\kappa_1-1}$. Then the rational map $\pi\circ \psi^{-1}$ is the projection away from $p$.  We obtain the following diagram
  \[
    \begin{tikzcd}
      \bar{B} = B \times \mathbb{P}^{\kappa_2} \times \dots \times \mathbb{P}^{\kappa_n} \arrow[d, "\bar\psi=\psi\times\id"'] \arrow[rd, "\bar \pi=\pi \times \id"] &                                                                   \\
      \mathbb{P}^{\kappa_1} \times \dots \times \mathbb{P}^{\kappa_n} \arrow[r, dashed]                               & \mathbb{P}^{\kappa_1-1} \times \dots \times \mathbb{P}^{\kappa_n}
      \end{tikzcd}
  \]
  where $\bar\psi$ is a map of smooth projective varieties and $\bar\pi$ is proper. Hence the morphism of cycles $\bar\psi^*$ and $\bar\pi_*$ are defined.
  Using \cite[9.3]{Eisenbud_Harris_2016} we have that the Chow ring of $\overline{B}$ is a tensor product of the Chow rings of its factors. The Chow ring of $B$ is described in \cite[Prop. 2.13]{Eisenbud_Harris_2016}.
  Let $\Lambda'$ in $\PP^\kappa$ be the pullback of a hyperplane in $\PP^{\kappa_1}$ not containing $p$ and $\Lambda = \bar\psi^{-1}(\Lambda')$. 
  We choose a monomial $H^\alpha \in A^*(\PP^\kappa)$ and represent it with a product of linear subspaces $[L_1 \times \cdots \times L_n]$ such that $\dim L_i= \kappa_i- \alpha_i$ and   $L_1 \times \PP^{\kappa_2} \times \cdots \times \PP^{\kappa_n} \subset \Lambda'$ for $\alpha_1 > 0$. We set $\Lambda_1$ for the preimage of $L_1$ in $B$. Then
  \begin{align*}
    \bar\pi_* \bar\psi^*(H^\alpha) &= \bar\pi_* \bar\psi^*([L_1 \times \cdots \times L_n]) = \bar\pi_*([\Lambda_1 \times L_2 \times \cdots \times L_n]) \\
    &= \begin{cases}
      [L_1 \times \cdots \times L_n] = H^{\alpha-e_1}, & \text{if } \dim L_1 = \kappa_1 -\alpha_1 < \kappa_1 \\
      \bar\pi_*([B \times L_2 \times \cdots \times L_n]) = 0, & \text{if }  \dim L_1 = \kappa_1.
    \end{cases} 
  \end{align*} 
  In the first case we use that we have an have an isomorphism $\bar\pi_{\mid \Lambda}: \Lambda \to \PP^{\kappa-e_1}$, compare with \cite[2.1.9]{Eisenbud_Harris_2016}. In the second case we use that the dimension of \[\bar\pi(B \times L_2 \times \cdots \times L_n) = \PP^{\kappa_1-1} \times L_2 \times \cdots \times L_n\] drops. 
  Assume $[X] = \sum_{|\alpha| =d} c_\alpha H^\alpha \cap [\PP^\kappa]$ and $Y = \bar\pi(\bar\psi^{-1}(X))$. Then $\bar\pi$ is birational and flat when restricted to $\bar\psi^{-1}(X)$. It follows 
  \[[Y] = \bar\pi_* \bar\psi^*[X] = \sum_{|\alpha| =d, \alpha_1 \geq 1} c_\alpha H^{\alpha- e_1} \cap [\PP^{\kappa-e_1}]. \]
  Hence $f=P_{[Y]}$.
\end{proof}

\begin{remark}
  The operators preserving dually Lorentzian polynomials from \cite{RSW23} in fact all preserve co-volume polynomials. This can be shown with the same proofs as in \cite{RSW23} where Lorentzian polynomials have to be replaced with volume polynomials and dually Lorentzian polynomials with co-volume polynomials, respectively.
\end{remark}

\section{Volume polynomials in matroid theory}
\label{sec:matroids}

Given a matroid $M$ of rank $r$ together with its set of bases $\B$ and its set $\I$ of independent sets we consider the corresponding generating polynomials
\[
B_M = \sum_{\beta \in \B} x^\beta, \qquad I_M=\sum_{\iota \in \I} x^\iota y^{r-|\iota|}
\]
If $M$ is realised by a set of vectors $v_1,\ldots,v_n \in V$ then by \cite{zbMATH06547829} (see also \cite{rhrle2024logarithmic}), the bases generating polynomial $B_M$ is a volume polynomial (by dualisation it is also a co-volume polynomial). The same has been conjectured for the $I_M$ in  \cite[Conjecture 5.6.]{Eur}.

In general, it seems natural to expect Lorentzian polynomials appearing as generating polynomials in matroid theory to be actually volume polynomials in the realisable case.

Let us discuss two instances of this principle.

\subsection*{Diagonalised independent set polynomials of  delta-matroids}
Following \cite{rhrle2024logarithmic} we may consider the matroid $M_{+m}$ corresponding to the set of vectors \[v_1,\ldots, v_n,v_{n+1},\ldots, v_{n+m} \in V,\] where $v_{n+1},\ldots,v_{n+m} \in V$ are chosen generically. Then $B_{M_{+m}}(x_1,\ldots,x_{n+m})$  is a volume polynomial. Hence, by Lemma~\ref{lem:linmap} also \[B_{M_{+m}}(x_1,\ldots,x_n,\underbrace{m^{-1}y,\ldots, m^{-1}y}_m) = \sum_{\iota \in \I} \binom{m}{r-|\iota|} m^{|\iota|-r} x^\iota y^{r-|\iota|}\]
is a volume polynomial and therefore the same is true for
\[N(I_M) = \lim_{m \to \infty} B_{M_{+m}}(x_1,\ldots,x_n,m^{-1}y,\ldots, m^{-1}y).\]
In \cite{larson2023rank} so-called \emph{delta-matroids} were considered. In the proof of  \cite[Theorem~3.15]{larson2023rank} the author shows that the diagonalised generating polynomial for the independent sets in a delta-matroid arises from the independent set generating polynomial of an \emph{enveloping matroid} via diagonalisation and truncation. Hence, together with Theorem~\ref{cor:diagonalisation} and Proposition~\ref{cor:uppertrunc} we conclude that the diagonalised independent set generating polynomial of a realisable delta-matroid is a denormalised volume polynomial (and by dualisation of matroids also a co-volume polynomial). Moreover, if  \cite[Conjecture 5.6.]{Eur} holds, then the diagonalised independent set generating polynomial of a realisable delta-matroid would be a volume polynomial, as well.

\subsection*{Twisted K-polynomials and polymatroids} Given matroid $M$ with rank function $\rk_M$ on a base set $E$. Consider subsets $S_1 \ldots, S_n \subset E$. Then the \emph{restriction polymatroid} $P$ of $M$ to $S_1 \ldots, S_n$ is given by the rank function
\(\rk_P(I)=\rk_M\left(\bigcup_{i\in I} S_i\right)\) for $I \subset \{1,\ldots,n\}$.
We will not state the definition of a polymatroid, but note that every polymatroid (given by its rank function) arises this way.

Consider a multiplicity-free subvariety $X \subset \PP^\kappa$, i.e. the coefficients of the corresponding co-volume polynomial $f=P_{[X]}$ are $0$ or $1$. Then every monomial of the corresponding volume polynomial $f^\vee=\partial_f(x^\kappa)$ has the form $\frac{x^\alpha}{\alpha!}$. Then $f^\vee$ is the exponential basis generating function of some polymatroid $P$, see e.g. \cite[Prop.~5.1]{zbMATH07638477}. Alternatively this polymatroid maybe described by the rank function $\rk_P(I)=\dim p_I(X)$, where $p_I \colon \PP^\kappa \to \prod_{i \in I} \PP^{\kappa_i}$ is the projection. This follows from \cite[Thm.~A]{zbMATH07638477}.

Note, that by \cite[Prop.~7.15]{castillo2024kpolynomialsmultiplicityfreevarieties} the exponential basis generating function of every \emph{realisable} polymatroid arises this way.

Now, consider the \emph{homogenised twisted K-polynomial} \[g(\underline{x},t)=\mathcal{K}(t-x_1,\ldots, t-x_n)=\sum_{\alpha,b} c_\alpha x^\alpha t^b,\] which is characterised (in the realisable case) by the following identity of $K$-classes in the Grothendieck ring $K(\PP^\kappa)$.
\[[\mathcal O_X]=\sum_\alpha c_\alpha [\mathcal O_{H_1}]^{\alpha_1}\cdots [\mathcal O_{H_1}]^{\alpha_n}\]
 In Theorem~1.4 of \cite{eur2024ktheoreticpositivitymatroids} the authors show that under some extra condition $g^\vee$ can be derived by operators of the type considered in Corollary~\ref{cor:deg-0-diff-operator} from the exponential basis generating function of $P$. In our setup the extra condition means that the projection $X \to \PP^{\kappa_i}$ is generically finite for at least one of the factors. Then it follows that in this situation the homogenised twisted K-polynomial of a multiplicity-free subvariety $X \subset \PP^\kappa$ is a co-volume polynomial. Similarly in the case of $P$ being a matroid, i.e. $\kappa=\underline{\mathbf 1}$, the authors show in \cite[Remark~3.5]{eur2024ktheoreticpositivitymatroids} that (without any extra condition) the polynomial  $g^\vee$ can be derived from a polynomial, which is known to be a volume polynomial in the realisable case via operation from Lemmata~\ref{lem:linmap} and \ref{lem:schubertisvol}. Hence, also for a multiplicity-free subvariety in $\PP^{\underline{\mathbf 1}}$ the twisted K-polynomial turns out to be a volume polynomial.
\printbibliography
\end{document}